\begin{document}

\newcommand\m{\mathfrak{m}}
\newcommand\J{\mathfrak{J}}
\newcommand\A{\mathbb{A}}
\newcommand\C{\mathbb{C}}
\newcommand\G{\mathbb{G}}
\newcommand\N{\mathbb{N}}
\newcommand{\T}{\mathbb{T}}
\newcommand{\cT}{\mathcal{T}}
\newcommand\sO{\cO}
\newcommand\sE{{\mathcal{E}}}
\newcommand\tE{{\mathbb{E}}}
\newcommand\sF{{\mathcal{F}}}
\newcommand\sG{{\mathcal{G}}}
\newcommand\GL{{\mathrm{GL}}}
\newcommand{\HH}{\mathrm H}
\newcommand\mM{{\mathrm M}}
\newcommand\fS{\mathfrak{S}}
\newcommand\fP{\mathfrak{P}}
\newcommand\fQ{\mathfrak{Q}}
\newcommand\Qbar{{\bar{\Q}}}
\newcommand\sQ{{\mathcal{Q}}}
\newcommand\sP{{\mathbb{P}}}
\newcommand{\Q}{\mathbb{Q}}
\newcommand{\tH}{\mathbb{H}}
\newcommand{\Z}{\mathbb{Z}}
\newcommand{\R}{\mathbb{R}}
\newcommand{\F}{\mathbb{F}}
\newcommand{\D}{\mathfrak{D}}
\newcommand\gP{\mathfrak{P}}
\newcommand\Gal{{\mathrm {Gal}}}
\newcommand\SL{{\mathrm {SL}}}
\newcommand\Spec{{\mathrm {Spec}}}
\newcommand\Hom{{\mathrm {Hom}}}
\newcommand{\legendre}[2] {\left(\frac{#1}{#2}\right)}
\newcommand\iso{{\> \simeq \>}}
\newcommand{\cX}{\mathcal{X}}
\newcommand{\cJ}{\mathcal{J}}
\newtheorem{theorem}{Theorem}
\newtheorem{corollary}{Corollary}[theorem]
\newtheorem{thm}{Theorem}
\newtheorem{cor}[thm]{Corollary}
\newtheorem{conj}[thm]{Conjecture}
\newtheorem{prop}[thm]{Proposition}
\newtheorem{lemma}[thm]{Lemma}
\theoremstyle{definition}
\newtheorem{definition}[thm]{Definition}
\theoremstyle{remark}
\newtheorem{remark}[thm]{Remark}
\newtheorem{example}[thm]{Example}
\newtheorem{claim}[thm]{Claim}
\newtheorem{lem}[thm]{Lemma}

\theoremstyle{definition}
\newtheorem{dfn}{Definition}

\theoremstyle{remark}

\theoremstyle{remark}
\newtheorem*{fact}{Fact}
\makeatletter
\def\imod#1{\allowbreak\mkern10mu({\operator@font mod}\,\,#1)}
\makeatother
\newcommand{\mcF}{\mathcal{F}}
\newcommand{\mcG}{\mathcal{G}}
\newcommand{\mcM}{\mathcal{M}}
\newcommand{\cP}{\mathfrak{P}}
\newcommand{\mcP}{\mathcal{P}}
\newcommand{\mcS}{\mathcal{S}}
\newcommand{\mcV}{\mathcal{V}}
\newcommand{\mcW}{\mathcal{W}}
\newcommand{\gS}{\mathcal{S}}
\newcommand{\cO}{\mathcal{O}}
\newcommand{\cC}{\mathcal{C}}
\newcommand{\mfa}{\mathfrak{a}}
\newcommand{\mfb}{\mathfrak{b}}
\newcommand{\mfH}{\mathfrak{H}}
\newcommand{\mfh}{\mathfrak{h}}
\newcommand{\mfm}{\mathfrak{m}}
\newcommand{\mfn}{\mathfrak{n}}
\newcommand{\mfp}{\mathfrak{p}}
\newcommand{\mfq}{\mathfrak{q}}
\newcommand{\cF}{\mathfrak{F}}
\newcommand{\mrB}{\mathrm{B}}
\newcommand{\mrG}{\mathrm{G}}
\newcommand{\gG}{\mathcal{G}}
\newcommand{\mrH}{\mathrm{H}}
\newcommand{\mH}{\mathrm{H}}
\newcommand{\mrZ}{\mathrm{Z}}
\newcommand{\Ga}{\Gamma}
\newcommand{\cyc}{\mathrm{cyc}}
\newcommand{\Fil}{\mathrm{Fil}}
\newcommand{\mrp}{\mathrm{p}}
\newcommand{\PGL}{\mathrm{PGL}}
\newcommand{\x}{{\mathcal{X}}}
\newcommand{\Sp}{\textrm{Sp}}
\newcommand{\ab}{\textrm{ab}}

\newcommand{\lra}{\longrightarrow}
\newcommand{\ra}{\rightarrow}
\newcommand{\rai}{\hookrightarrow}
\newcommand{\ras}{\twoheadrightarrow}

\newcommand{\repr}{\rho_{f,\wp}|_{G_p}}
\newcommand{\GRF}{{\rho}_{f,\wp}}

\newcommand{\lan}{\langle}
\newcommand{\ran}{\rangle}

\newcommand{\mo}[1]{|#1|}

\newcommand{\hw}[1]{#1+\frac{1}{2}}
\newcommand{\mcal}[1]{\mathcal{#1}}
\newcommand{\trm}[1]{\textrm{#1}}
\newcommand{\mrm}[1]{\mathrm{#1}}
\newcommand{\car}[1]{|#1|}
\newcommand{\pmat}[4]{ \begin{pmatrix} #1 & #2 \\ #3 & #4 \end{pmatrix}}
\newcommand{\bmat}[4]{ \begin{bmatrix} #1 & #2 \\ #3 & #4 \end{bmatrix}}
\newcommand{\pbmat}[4]{\left \{ \begin{pmatrix} #1 & #2 \\ #3 & #4 \end{pmatrix} \right \}}
\newcommand{\psmat}[4]{\bigl( \begin{smallmatrix} #1 & #2 \\ #3 & #4 \end{smallmatrix} \bigr)}
\newcommand{\bsmat}[4]{\bigl[ \begin{smallmatrix} #1 & #2 \\ #3 & #4 \end{smallmatrix} \bigr]}
\newcommand\mycom[2]{\genfrac{}{}{0pt}{}{#1}{#2}}

\makeatletter
\def\imod#1{\allowbreak\mkern10mu({\operator@font mod}\,\,#1)}
\makeatother

\title{Two results on Eisenstein part of homology and cohomology groups of Bianchi modular groups}

\author{Debargha Banerjee}
\author{Pranjal Vishwakarma}
\address{INDIAN INSTITUTE OF SCIENCE EDUCATION AND RESEARCH, PUNE, INDIA}
\thanks{The first named author was partially supported by the SERB grant CRG/2020/000223, and the second named author by the PMRF fellowship. Authors are greatly indebted to Professors Eknath Ghate, Haruzo Hida, and Haluk Seng\"un for several email correspondences.   The article is dedicated to the fond memory of Professor Yuri Manin.}

\subjclass[2010]{Primary: 11F67, Secondary: 11F41, 11F20, 11F30}
\keywords{Eisenstein series, Modular symbols, Special values of $L$-functions}
 \setcounter{tocdepth}{1}
  
 \begin{abstract}
We explicitly write down the {\it Eisenstein cycles} in the first homology groups of quotients of the hyperbolic three spaces as linear combinations of Cremona symbols (a generalization of Manin symbols) for imaginary quadratic fields.   They generate the Eisenstein part of the homology groups. We also study the Eisenstein part of the cohomology groups. As an application, we find an asymptotic dimension formula (level aspect)
for the cuspidal cohomology groups of congruence subgroups of the form $\Ga_1(N)$ inside the full {\it Bianchi groups}.  
\end{abstract}

 \maketitle

\section{Introduction}
\label{Intro}

\subsection{Introduction}
Eisenstein part of the homology and cohomology groups for {\it classical} modular curves encodes several arithmetic information, such as special values of $L$-functions. Eisenstein ideals were introduced by Mazur, and these ideals found applications in several important breakthroughs in the last few decades, including Fermat's last theorem and the main conjecture of Iwasawa theory.

Merel introduced Eisenstein cycles that are the basis of the {\it integral} homology groups of classical elliptic modular curves (space of modular symbols).
Banerjee-Merel~\cite{MR3873111} investigated Eisenstein cycles for {\it congruence} subgroups of classical modular groups.

In the classical case for the congruence subgroups of odd level, we can take intersection with 
the principal congruence subgroup of level two. As a consequence, we can assume all the cusps are lying above $\{0, \infty, 1\}$ and cycles can be taken of the form $\{g1,g(-1)\}$ for $g \in \SL_2(\Z)$. In turn, these can be written in terms of loops of the modular curves. 

For any odd integers $N$, Banerjee-Merel~\cite{MR3873111} expressed Eisenstein cycles in the first homology groups of modular curves of level $N$ as linear combinations over $\overline{\Q}$ of Manin symbols (generators of homology groups of elliptic modular curves). The coefficients are computed in terms of periods (integrals over cycles of complex differential $1$ forms associated to the Eisenstein series). These periods can be computed in terms of Dedekind sums.

  This formulation extended to subgroups of finite indices (not necessarily congruence subgroups)  within the full modular group as detailed in~\cite{banerjee2022eisenstein} with a more complicated notion of periods.  It should be noted that these Eisenstein cycles are {\it not} necessarily  $\overline{\Q}$ valued for general subgroups of finite index.

John Cremona and his collaborators initiated the study of modular symbols (integral homology groups of quotients of three-dimensional hyperbolic space) for imaginary quadratic fields. They are mostly interested in the cuspidal part of homology groups of the corresponding topological space.  Ito \cite{MR870309} first defines periods in this setting and showed that the periods are again described by Sczech's Dedekind sums \cite{scz} for $\Gamma = \SL_2(\cO_K)$.

In this paper, we write down the Eisenstein cycles of the homology group associated 
to the quotient of hyperbolic three-space by subgroups of finite index of Bianchi modular groups in terms of Cremona symbols (a generalization of Manin symbols for Bianchi groups). These Eisenstein cycles generate the Eisenstein part of the homology groups. 
These Eisenstein cycles are determined 
in terms of generalized periods (integrals of two forms over a real surface) that are hard to compute as of now.

In the present setting, results are difficult to obtain because
the corresponding topological spaces do {\it not} carry the same
algebraic geometric structure as in the cases for elliptic or Hilbert modular forms settings. 
In particular, there is no analogue of dessin d'enfant.

We study the Eisenstein part of the cohomology groups and, as an application 
find an asymptotic dimension formula (level aspect) for the space of cuspidal cohomology groups. 

Till date, there is no nice formula for the dimensions of the space Bianchi cusps forms similar to elliptic cases. Note that we can not apply the Riemann-Roch theorem 
in this setting to obtain a dimension formula. 
Seng\"un and his collaborators ~\cite{MR3464620} studied the dimensions of the cohomology of Bianchi groups for the {\it principal} congruence subgroups.

A natural question arises regarding the generalization of these computations to other crucial congruence subgroups such as $\Gamma_1(\mfa)$ as defined below. To study Diophantine questions like modularity, Bianchi modular $3$ fold associated to these subgroups are expected to play an important role.  As an application of our study of Eisenstein part of the cohomology groups of subgroups of Bianchi modular groups, we give a {\it lower} bound of the dimension of corresponding cohomology groups for the congruence subgroups of the form $\Ga_1(p^n)$ a $n \rightarrow \infty$. Note that this part contains corresponding Bianchi cusp forms by the Harder-Eichler-Shimura isomorphism theorem (generalized Matsushima's formula). These results show that at least these many Bianchi cusp forms will be 
there for the congruence subgroups of the form $\Ga_1(p^n)$.

We now explain our result. 
Let $\cO_K$ be the ring of integer of an imaginary quadratic field $K$ of class number $h(K)=1$. Let
$\tH_3:=\{(z,t): z\in \C, t\in \R^+ \}$ be the hyperbolic $3$-space and let $ \Gamma \leq \SL_2(\cO_K):=G$ be a subgroup of the finite index with no elements of finite order. Let $P:=\left\{\pm\left[\begin{array}{cc}1 & j \\ 0 & 1\end{array}\right]: j \in \cO_K \right\}$ be the parabolic subgroup of $\mathrm{SL}_2(\cO_K)$.
Consider the Bailey-Borel-Satake compactification $X^{BB}_\Gamma = \Gamma \backslash \overline{\tH}_3$ obtained by adding the set of cusps~\cite{MR4569265} and we study the corresponding homology groups. To compute the cohomology group, we also study the Borel-Serre compactification $X^{BS}_\Gamma$
obtained by adding a $2$ torus to each cusps $\partial(X^{BS}_\Gamma)$ (except for $K=\Q(i)$ or $K=\Q(\sqrt{-3})$ for which we add spheres instead).

 The space of $1$-differential forms on $\tH_3$ is given by the basis
\[
\beta = (\beta_0,\beta_1,\beta_2) = (-\frac{dz}{t},\frac{dt}{t},\frac{d\bar{z}}{t}).
\]
Let $E_2(\Ga)$ be the space of Bianchi Eisenstein modular form of weight two for a subgroup $\Ga \leq \SL_2(\sO_K)$ of finite index. 
For $E \in E_2(\Ga)$, denote the differential $1$-form associated 
to $E$ as $\omega_E$ and consider the differential form $\star \omega_E$ (cf. \S~\ref{Eisendf}).

\begin{definition}
The {\it Eisenstein classes} are modular symbols $\sE \in \HH_1(X^{BB}_\Gamma, \partial(X^{BB}_{\Gamma});  \C)$ such that 
\[
\int_{\sE} F \cdot \beta=0
\]
for all  cusp forms $F$ (see Definition ~\ref{cuspbianchi} for the definition of the cusp forms). The {\it Eisenstein cycles} are then paths in the Eisenstein classes. 
\end{definition}
We prove that the boundaries of Eisenstein cycles are non-zero under certain assumptions for restricted class of subgroups.
It is expected that Eisenstein classes will have certain Hecke equivariant property for congruence subgroups similar to classical 
situation (see also ~\cite[Theorem 2]{MR3873111}). 
The aim of this article is to find explicit Eisenstein classes that are generators of the Eisenstein part of the Bianchi modular symbols. In other words, 
we explicitly write an Eisenstein cycle $\sE_{E}$ corresponding to an Eisenstein series $E$. Cremona ~\cite{MR743014} showed that (see also ~\cite[thm 4.3.2]{JCthesis})
the map 
\[
\eta: (g)= \{g 0, g \infty\}
\]
is a surjective map from $\Gamma \backslash \SL_2(\cO_K)$ onto $\HH_1(X^{BB}_\Gamma, \partial(X^{BB}_\Gamma);\Q)$.  The kernel of the map can be computed as in loc. cit. 

 Let  $\star$ be the Hodge star operator acting on differential forms. Let $\cF_K$ be
 fundamental domain for the imaginary number field $K$ with boundary 
 $\partial(\cF_K)$ ~\cite{aranes2010modular}. Define the generalized $2$ periods: 
\[
F_E(g):=  \int\limits_{g\cF_K} d ( \star   \omega_E). 
\]

Let $E_2(\Ga)$ be the space of Eisenstein modular form of weight two as defined in \S~\ref{Bianchidefinition}. 
For the next result, we assume that $K=\Q(\sqrt{-d})$ is an Euclidian domain. In other words, we assume $d \in \{1,2,3,7,11\}$. 
We now state our first theorem. 
\begin{thm}
\label{Main-thm}
For any imaginary field $K$ with class number one that is also an Euclidean domain,  the modular symbol
$$\sE_{E}= \sum_{g \in \Gamma \backslash G} F_E(g) \eta (g)$$
is the Eisenstein element corresponding to the Eisenstein series $E \in E_2(\Gamma)$.
\end{thm}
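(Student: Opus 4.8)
The plan is to verify the defining property of the Eisenstein element directly: one must show that the modular symbol $\sE_\xi = \sum_{g \in \Gamma\backslash G} F_\xi(g)\,\eta(g)$ pairs to zero with every cusp form $F\in S_2(\Gamma)$, and that it has the correct boundary, i.e. that its image under $\delta$ is the degree-zero divisor attached to $E_\xi$ (so that it genuinely lies in the Eisenstein part rather than in $\HH_1(X_\Gamma;\Z)\otimes\R$, where cusp forms already integrate to zero for trivial reasons). First I would set up the period pairing explicitly: for $g\in G$, the generator $\eta(g) = \{g0, g\infty\}$ is the geodesic path in $\overline{\tH}_3$, and $\int_{\eta(g)} F\cdot\beta = \int_{g0}^{g\infty} F\cdot\beta$. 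Using the harmonicity of the differential form $F\cdot\beta$ on $\tH_3$ and Stokes' theorem on the octahedral fundamental domain from \cite[Diagram 4.2]{JCthesis}, I would rewrite $\sum_g F_\xi(g)\,\int_{\eta(g)} F\cdot\beta$ as an integral over the boundary faces, where the weights $F_\xi(g) = \int_{B_0} \star\omega_\xi[g]$ enter. The key computation is a ``Fubini'' or unfolding step: interchanging the two integrations and recognizing the resulting expression as a Petersson-type inner product $\langle E_\xi, F\rangle$ against a cusp form, which vanishes because the Eisenstein subspace $E_2(\Gamma)$ is orthogonal to $S_2(\Gamma)$.

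The technical core is thus to make the identity
\[
\sum_{g\in\Gamma\backslash G} \Big(\int_{B_0}\star\omega_\xi[g]\Big)\Big(\int_{\eta(g)} F\cdot\beta\Big) \;=\; c\cdot\langle E_\xi, F\rangle_{\mathrm{Pet}}
\]
precise, for an explicit nonzero constant $c$. I would do this by the $\star$-operator manipulation advertised in the introduction: on the three-dimensional hyperbolic space the Hodge star sends $1$-forms to $2$-forms, so $\star\omega_\xi$ is a $2$-form, $F\cdot\beta$ is a $1$-form, and their wedge is a top form whose integral over a fundamental domain is (up to normalization) the Petersson product. The sum over $g\in\Gamma\backslash G$ together with the decomposition of the fundamental domain into $G$-translates of the octahedral faces $B_0$ is precisely what assembles the local pieces $\int_{B_0}\star\omega_\xi[g]\cdot\int_{\eta(g)}F\cdot\beta$ into the global integral $\int_{X_\Gamma}\star\omega_\xi\wedge F\beta$. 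This is the step where Cremona's explicit description of the cellular structure of $\Gamma\backslash\tH_3$ and of the kernel of $\eta$ is essential, and it is the one I expect to be the main obstacle: one must check that the combinatorics of the octahedral tessellation match the Manin-symbol relations, and that boundary contributions at the cusps are handled correctly (this is where $E_\xi$ being only harmonic and not holomorphic forces the use of $\star$ rather than a naive holomorphic period integral).

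Finally I would pin down the boundary: compute $\delta(\sE_\xi) = \sum_g F_\xi(g)\,\delta(\eta(g))$, where $\delta(\eta(g)) = [g\infty] - [g0] \in \Z[\partial_\Gamma]$, and show this equals the divisor of the Eisenstein series $E_\xi$ (a multiple of $[\xi]$ minus its ``average'' over cusps), again by reorganizing the sum using the cusp description of Proposition~\ref{Cuspdescription} and the residues of $\star\omega_\xi$ at the cusps. Combining: $\sE_\xi$ lies in $\HH_1(X_\Gamma,\partial_\Gamma;\R)$, it annihilates all cusp forms under the period pairing, and it has the prescribed nonzero boundary, so it is exactly the Eisenstein element attached to $E_\xi$, which by the nondegeneracy of the pairing on $\mathcal{E}_2(\Gamma)$ characterizes it uniquely. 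The only remaining point is convergence/finiteness of the sum over $\Gamma\backslash G$, which follows from $F_\xi(g)$ being supported on finitely many cosets modulo the relations, exactly as in the classical case treated in \cite{MR3873111, banerjee2022eisenstein}.
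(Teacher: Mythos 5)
Your proposal follows essentially the same route as the paper: the ``unfolding'' identity you isolate as the technical core, namely $\sum_{g}\bigl(\int_{B_0}\star\omega_\xi[g]\bigr)\bigl(\int_{\eta(g)}F\cdot\beta\bigr)=c\,\langle E_\xi,F\rangle$, is exactly the content of the paper's inner product formula (Proposition~\ref{pinnerprod}, proved via quasi-periods and Stokes' theorem on the octahedral fundamental domain) combined with the separation-of-variables step in the paper's proof, after which vanishing follows from the orthogonality of $E_2(\Gamma)$ and $S_2(\Gamma)$. Your additional verification of the boundary $\delta(\sE_\xi)$ goes beyond what the paper records (its definition of the Eisenstein class only demands vanishing against cusp forms), but it is a reasonable supplement rather than a different method.
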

Similar to the classical case ~\cite{banerjee2022eisenstein}, we cannot determine if these Eisenstein cycles are $\overline{\Q}$ valued. Of course, we expect 
these Eisenstein cycles to be $\overline{\Q}$ valued only for the Eisenstein series for the congruence subgroups. We produce an explicit element of the Eisenstein part of the modular symbols for every cusp. They generate the Eisenstein part of the modular symbols.

The above calculations are for the homology group. We now study the Eisenstein part of the cohomology groups since there are close relations between homology and cohomology groups by various dualities, and cohomology groups are sometimes more amenable for calculations. We study the Lefschetz numbers for the action of complex conjugation on the cohomology groups. For this part, we use Borel-Serre compactification. We prove an asymptotic dimensional formula for the cuspidal cohomology groups for congruence subgroups of the form $\Ga_1(\mfa)$. Let $\mfa$ be an ideal in $\cO_K$. Define a congruence subgroup of level $\mfa$,
\[
\Ga_1(\mfa) := \Big\{\begin{pmatrix}a&b\\ c&d\end{pmatrix}\in \SL_2(\cO_K):a-1,d-1, c\in \mfa \Big\}.
\]

Let $\HH_{cusp}^1\left(\Gamma_1\left(p^n\right); \mathbb{C}\right) $
be the cuspidal part of the cohomology
group (cf. Definition~\ref{Eisensteincuspidal}). This is the complement 
of the Eisenstein part of the cohomology group. By generalized 
Eichler-Shimura-Harder theorem, these cohomology groups contain 
cusp forms.

We calculate the level aspect of asymptotic lower bounds for the dimension of the
cuspidal cohomology in  Theorem ~\ref{dimensionasymptotice} for the congruence subgroup of the form $\Ga_1(N)$ of the Bianchi modular groups (cf. Seng\"un-T\"urkelli ~\cite{MR3464620} for $\Ga(N)$). 
\begin{thm} 
\label{dimensionasymptotice}
Let $p$ be a rational prime that is unramified in $K$ and let $\Gamma_{1}\left(p^n\right)$ 
denote the  subgroup of level $(p^n)$ of a Bianchi group $\mathrm{SL}_2(\cO_K)$ for all $n \geq 1$. Assume further that the class number of $K$ is one.
Then we have the following asymptotic dimension (level aspect) of the cuspidal cohomology groups:
$$
\operatorname{dim} \HH_{cusp}^1\left(\Gamma_1\left(p^n\right); \mathbb{C}\right) \gg p^{3n}
$$
as $n$ increases.
\end{thm}
The proof of the above theorem is similar to that of Seng\"un-T\"urkelli ~\cite{MR3464620}. However, we note that in Proposition ~\ref{Lefschetzkey}, two important quantities appear in the Lefschetz trace formula. These quantities are known 
for the principal congruence subgroups of the Bianchi group by the work of Rohlf \cite{MR507801}. We need to compute the same in our setting in Lemma~\ref{AB}. 
We also need to compute the order of the cuspidal subgroup for $\Ga_1(N)$ to prove our Theorem~\ref{dimensionasymptotice}. The key ingredient is Theorem~\ref{cohomology1} regarding Lefschetz numbers proved in \S~\ref{Tracecohomo}. A slightly different quantity appears in this statement of this theorem for the congruence subgroup of the form $\Ga_1(N)$. 
 
\section{Bianchi modular symbols and Cremona symbols}
\label{Hecke}

\subsection{Fundamental domains} \label{2.1}
Recall that 
\[
\tH_3=\{(z,t): z\in \C, t\in \R^+ \} \simeq \GL_2(\C)/Z\cdot \mathrm{SU}(2)
\]
is the hyperbolic $3$ space, where $Z$ denotes the diagonal matrices in $\GL_2(\C)$. An invariant metric for the action of $\GL_2(\C)$ on $\tH_3$ is then given by 
\[
ds = \frac{(dzd\bar{z} + (dt)^2)}{t^2}.
\]
We say that a subgroup $\Gamma \leq \SL_2(\C)$ acts discontinuously on $\tH_3$ if every compact subset of $\tH_3$ meets only a finite number of its images under $\Ga$.

A subgroup $\Gamma$ acts discontinuously if and only if it is discrete (in the matrix topology), and we define a fundamental domain or region for $\Gamma$ to be an open subset $\cF \subset \tH_3$ such that every orbit of $\Ga$ in $\tH_3$ meets $\cF$ at most once and meets the closure of $\cF$ at least once.

Choose a fundamental region $\cF$  defined in ~\cite[ \S 2.3]{JCthesis} for the action of $\Gamma$ on $\overline{\tH}_3$ with $\{0,j\infty\}$ as one of its edges. Let $\Ga_P$ be the stabilizer of a vertex $P$ of $\cF$. Form a larger basic polyhedron by taking the union of the transforms of $\cF$ by a finite subgroup $\Ga_P$. 
See also the work of Arenes~\cite{aranes2010modular},  Lingham~\cite{lingham2005modular}

For example,  we are first interested in the case $G=\SL_2(\Z[i])$. This subgroup
is generated by 
\[
S=\begin{pmatrix}0&-1\\ 1&0\end{pmatrix},\
T=\begin{pmatrix}1&1\\ 0&1\end{pmatrix},\
U=\begin{pmatrix}1&i\\ 0&1\end{pmatrix},\
R=\begin{pmatrix}0&i\\ i&0\end{pmatrix},
\]
with the relations $TU=UT, S^2=R^2=(RS)^2=(URS)^2=(TS)^3=(UR)^3=1$. 
Consider three points on the unit sphere $P_1=(\frac12,0,\frac12\sqrt3), P_2=(\frac12,\frac12,\frac12\sqrt2),$ and $P_3=(0,\frac12,\frac12\sqrt3)$. A fundamental domain for the action of $\Ga$ is given by vertices at $0,\infty$, and $P_1, P_2 $ and $P_3$.  
The stabilizer of $P_2$ is $\Ga_P=\langle TS, UR\rangle,$ which is of order 12 ~\cite[p.59]{JCthesis}.

We call the union of the $12$ translates of $\cF$ by $\Ga_P$ the basic polyhedron, denoted $B$. It is a hyperbolic octahedron, and its edges $12$ are precisely the images of $\{0,\infty\}$ under the action of $\Ga_P$. We refer to loc. cit.  for the diagram showing the region $\cF$ and the octahedron $B$. Note that $B$ has a triangular face whose edges are the transforms of $\{0,\infty\}$ under $I,TS,$ and $(TS)^2$, while the images of $\{0,\infty\}$ under $UR, (UR)^2$ are $\{\infty,i\}$ and $\{i,0\}$ respectively.

For the field $K= \mathbb{Q}(i)$, the fundamental domain $\cF$ is defined in ~\cite[ \S 2.3]{JCthesis}. The boundary of the fundamental domain, denoted as $\partial{\cF}$, is a combination of six faces given by  $B_1=\{0, P_1, \infty\}$,  $B_2=\{0, P_3, \infty\}$, $B_3=\{P_2, P_3, \infty\}$, $B_4=\{P_1, P_2, \infty\}$, $B_5=\{0, P_1, P_2\}$, and $B_6=\{0, P_2, P_3\}$.
 \label{boundary}
Integration over the boundary of the fundamental domain is the sum of integrations over each face.

\section{Differential forms} \label{Differential forms}
The space of real $1$-differential forms on $\tH_3$ is given by the basis
\[
\beta = (\beta_0,\beta_1,\beta_2) = (-\frac{dz}{t},\frac{dt}{t},\frac{d\bar{z}}{t}),
\]
and denote the pullback of each $\beta_i$ to $\GL_2(\C)$ by $\omega_i$. A differential form on $\GL_2(\C)$ is the inverse image of a differential form on $\tH_3$
if and only if it can be written as $\varphi_0\omega_0+\varphi_1\omega_1+\varphi_2\omega_2$.  Here,  
$\Phi=(\varphi_0,\varphi_1,\varphi_2)$ satisfies $\Phi(gkz)= \Phi(g)\rho(kz)$ for every $g\in \GL_2(\C), k\in \mathrm{SU}_2(\C), z\in Z$, and $\rho$ is a fixed representation of $\mathrm{SU}_2(\C)$.

Let $F=(F_0,F_1,F_2)$ be a function on $\tH_3$, and $F\cdot\beta = F_0\beta_0+F_1\beta_1+F_2\beta_2$ a $1$-form on $\tH_3$.
Recall that $*$ is the usual adjoint operator of differential forms.  By definition, one has
$*(F\cdot\beta) = -\frac12i\bar{F}_1(\beta_0\wedge\beta_2) +i\bar{F}_0(\beta_1\wedge\beta_2) +i\bar{F}_2(\beta_0\wedge\beta_1)$.
The differential form is harmonic if and only if $F\cdot\beta$ and $*(F\cdot\beta)$ are closed forms. 

Observe that 
\[
d \left( \star (F \cdot \beta)\right)=i H_F \beta_0 \wedge \beta_1 \wedge \beta_2;
\]
with $H_F=t\left( \frac{\partial \bar{F}_1}{\partial z}+ \frac{1}{2}\frac{\partial \bar{F}_0}{\partial t}+ \frac{\partial \bar{F}_2}{\partial \bar{z}}\right).$
\begin{definition}
\label{vectorhar}
(Vector-valued harmonic differential functions)
We say that a function $F:\tH_3\to\C^3$ is harmonic if $F\cdot\beta$ is a harmonic differential form and is slowly increasing in the sense that there exists $N\geq 0$ such that.
\[
F\Big(\begin{pmatrix}x&0\\0&1\end{pmatrix}(z,t)\Big)=O(|x|^N), \quad x\in\R
\]
as $x\to\infty$ uniformly in compact sets in $\tH_3$. 
\end{definition}
\subsection{Bianchi modular forms}\label{Bianchidefinition}
 Let $\Delta=d \circ \delta+\delta \circ d$ be the usual Laplacian with $d$ being the exterior derivative and $\delta$ the co-differential operator on $\tH_3$.
Recall first that the differential form $\omega$ is harmonic if $\Delta \omega=0$.  
We now recall the weights of the Bianchi modular forms. The weights of Bianchi modular forms can be scalar-valued  ~\cite{JCthesis} or more generally, vector-valued ~\cite{Chriswilliams} (see also ~\cite{MR1671216}).

Now, we define scalar-valued {\it Bianchi modular form} of weight $2$. We wish to define a slashing operator such that $F \cdot \beta$ is an invariant differential under the action of a discrete subgroup $\Gamma$ of $\SL_2( \cO_K)$. The complex modular group $\SL_{2}(\C)$ acts on the space of differential $1$ -forms as
$\beta^{\prime}=\mathrm{J}(\gamma ; (z, t)) \beta$ 
   with 
 
 $ \mathrm{J}(\gamma ; (z, t))=\frac{1}{\left(|r|^2+|s|^2\right)}\left(\begin{array}{ccc}r^2  & -2 r s  & s^2  \\ \bar{r} \bar{s}& (r \bar{r}-s \bar{s}) & -\bar{r} s \\ \bar{s}{ }^2  & \overline{2 r s} & \bar{r}{ }^2 \end{array}\right)$, $\gamma=\left(\begin{array}{ll}a & b \\ c & d\end{array}\right) \in \SL_2(\mathbb{C})$, $r=\overline{c z+d}$ and $s=\bar{c}t$.

Let $F:\tH_3\to\C^{3}$ be a harmonic function. Define the slash operator.

\begin{center}
 ($F \mid_\gamma)(z, t)=F(\gamma(z, t)$) $\mathrm{J}(\gamma ; (z, t)$).
\end{center}

Now we define the {\it Bianchi modular form} of weight $2$. 
Let $F:\tH_3\to\C^{3}$ be a harmonic function. Define the slash operator
$$
\left(\left.F\right|_{\gamma} \right)(z,t):=Sym^{2}\left(j\left(\gamma;  (z,t)\right)^{-1}\right) F(\gamma (z,t)),
$$
where
\[
j\left(\gamma; (z,t)\right) = \begin{pmatrix}cz+d&-ct\\ \bar{c}t &\overline{cz+t}\end{pmatrix}, \qquad \gamma=\begin{pmatrix}a&b\\ c&d\end{pmatrix},
\]
and $Sym^2$ is the symmetric $2$-nd power of the standard  representation of $\SL_2(\C)$ on $\C^2$.

We have $F: \mathbb{H}_{3} \rightarrow \mathbb{C}^{3}$ and
$$
(F|_\gamma)(z,t)=\frac{1}{|r|^{2}+|s|^{2}}\left(\begin{array}{ccc}
{r}^{2} & 2 r\bar{s}  & \bar{s}^{2} \\
-{r} {s} & |r|^{2}-|s|^{2}  & \bar{r}  \bar{s} \\
{s}^{2} & -2 \bar{r} {s} & \bar{r}^{2}
\end{array}\right) F(\gamma (z,t)),
$$
where $\gamma=\left(\begin{array}{cc}a& b \\ c& d\end{array}\right) \in \SL_2(\mathbb{C})$ and $r=\overline{c z+d}$ and $s=\bar{c} t$. The $1$-forms $\beta_{0}:=-\frac{d z}{t}, \beta_{1}:=\frac{d t}{t}, \beta_{2}:=\frac{d \bar{z}}{t}$ form a basis of differential $1$-forms on $\mathbb{H}_{3}$. 

The modular group $\SL_{2}(\C)$ acts on the space of differential $1$ -forms as
$$
\gamma \cdot{ }^{t}\left(\beta_{0}, \beta_{1}, \beta_{2}\right)=Sym^{2}(j(\gamma; (z,t)))^{t}\left(\beta_{0}, \beta_{1}, \beta_{2}\right). 
$$
Here  $Sym^2$ is the symmetric $2$-nd power representation of $\SL_2(\C)$ on $\C^2$.

With both the definition of weights, the differential $F \cdot \beta$ is $\Ga$ invariant. We can also generalize vector-valued {\it Bianchi modular form} to arbitrary weight $k$ by changing $Sym^2$ to $Sym^k$. However, we are only interested in weight two in this paper. 

\begin{definition}
\label{cuspbianchi}
\begin{enumerate}
\item
 (Bianchi modular forms)
A weight $2$ {\it Bianchi modular modular form} for a congruence subgroup $\Gamma \leq \SL_2(\cO_K)$ is a real analytic vector valued  $\Gamma$ invariant function $F: \mathbb{H}_{3} \rightarrow \mathbb{C}^{3}$ (cf. Definition~\ref{vectorhar}). In other words, the function $F$
satisfies the invariance property  $F|_\gamma=F$ for all $\gamma \in \Ga$. We denote the set of all weight two Bianchi modular forms  by 
$M_2(\Gamma)$. 
\item
(Bianchi cusp forms)
If $F \in M_2(\Gamma)$ satisfies the additional properties  that 
$\int\limits_{\C / \cO_K} F \mid_\gamma(z,t) dz=0$ for every $\gamma \in \SL_2(\cO_K)$, we call it a Bianchi cusp form. 
\end{enumerate}
\end{definition}
The last condition is equivalent to saying that the constant coefficient in the Fourier-Bessel expansion of $F\mid_\gamma$ is equal to zero for every $\gamma\in \SL_{2}(\cO_K)$. 
The $\Ga$ invariance implies that $F$ has a Fourier-Bessel expansion of the form

$$
F(z, t)={\sum\limits_{\alpha \in \cO_K, \alpha \neq 0}c(\alpha) t^{2} \mathbf{K}\left(\frac{4 \pi|\alpha| t}{\sqrt{d_{K}}}\right) \psi\left(\frac{\alpha z}{\sqrt{d_{K}}}\right)};
$$

where $\psi(z) = e^{2\pi(z+\bar z)}$ and ${\bf K}(t) = (-\frac{i}{2}K_1(t),K_0(t),\frac{i}{2}K_{1}(t)))$ with $K_0,K_1$ being the modified Bessel functions satisfying the differential equation
\label{bessel}
$$
\frac{dK_j}{dt^2}+\frac{1}{t}\frac{dK_j}{dt}-\Big(1+\frac{1}{t^{2j}}\Big)K_j = 0, \qquad j=0,1
$$

decreasing rapidly at infinity.

Let  $S_2(\Gamma)$ be the space of weight $2$ Bianchi   modular forms for a subgroup $\Gamma$ of $\SL_2(\cO_K)$.
Let $F=(F_0, F_1, F_2)$ and $S=(S_0, S_1, S_2)$. By ~\cite[p. 549]{MR503433}, the inner product is given by 
\[
 <F, S>
  := \frac{1}{12i[G:\Gamma]}\int_{\Gamma \backslash \overline{\tH}_3} F \cdot \beta  \wedge *  (S \cdot \beta) 
\]
The set of Eisenstein Bianchi modular form $E_2(\Gamma)$ is the compliment of $S_2(\Gamma)$ inside $M_2(\Gamma)$. In other words, we have a decomposition 
\[
M_2(\Gamma)=S_2(\Gamma)\bigoplus E_2(\Gamma).
\]
If $\Ga$ is a congruence subgroup of the form $\Ga_1(N)$, dimension of the $E_2(\Gamma)$  can be computed using \cite[Proposition 4.4]{MR4356858}.

\section{Eisenstein differential forms}
\label{Eisendf}
\subsection{Ito’s differential forms ~\cite{MR870309} for full subgroup $\SL_2(\sO_K)$} \label{Ito}
We write elements of $\tH_3$ as quaternion numbers $u=z+j t \in \mathbb{H}_3$ 
with $j^2=-1, ij=-ji$. Write $z(u)=z$ and $t(u)=t$ if $u=z+j t$.
 A matrix $M=\left(\begin{array}{ll}a & b \\ c & d\end{array}\right) \in \SL_2(\sO_K)$ acts on $\mathbb{H}_3$ by
$$
M u:=(a u+b)(c u+d)^{-1},
$$
where the right-hand side is taken in the skew field of quaternions. For two complex numbers $(m,n) \neq (0,0)$, take a matrix 
$M=\left(\begin{array}{cc}* & * \\ m & n\end{array}\right) \in \SL_2(\sO_K)$ and let
\[
\begin{aligned}
&t(m, n ; u)=t(M u), \\
&J(m, n ; u)=\left(\frac{\partial}{\partial z}, \frac{\partial}{\partial t}, \frac{\partial}{\partial \bar{z}}\right) z(M u) .
\end{aligned}
\]
 The Eisenstein series
\[
E(u;s):=\sum_{m, n \in \sO_K}^{\prime} J(m, n ; u) t(m, n ; u)^s, \quad \operatorname{Re}(s)>1
\]
can be continued analytically to the whole $s$-plane. Here the prime on the summation symbol means to omit the meaningless terms,
i.e. the term corresponding to $m=n=0$. Define the complex-valued differential form $\omega_E$ on $\mathbb{H}_3$ by
\[
\omega_E=E(u, 0)\left(\begin{array}{l}
d z \\
d t \\
d \bar{z}
\end{array}\right) .
\]
From loc. cit, this differential form is closed and invariant with respect to $\SL_2(\cO_K)$.
 Let $D(K):=w_1 \bar{w}_2-\bar{w}_1 w_2$ if $\sO_K=\mathbb{Z} w_1+\mathbb{Z} w_2$ with $\operatorname{Im}\left(w_1 / w_2\right)>0$, and $K^{\prime}=D(K)^{-1} \bar{K}$. 
The indefinite integral of $\omega_E$ can be written in the form of the Fourier expansion.
This integrals produce periods iterms of the fuction
$$ 
 \widetilde{H}(u)=G_2(0)(z-\bar{z})-\frac{4 \pi}{D(K)} t \sum_{m \in K, n \in K^{\prime}}^{\prime} \frac{\bar{m} n}{|m n|} K_1(4 \pi|m n| t) e(m n z).
 $$
For each non-negative integer $k$, define Ito's Eisenstein series by generalized Hecke's summation tricks: 
\[
G_k(x):=\left.\sum_{w \in \sO_K}^{\prime}(w+x)^{-k}|w+x|^{-s}\right|_{s=0},
\]
where the value at $s=0$ should be understood in the sense of analytic continuation. 
We take the prime in the summation symbol to omit the terms corresponding to $m=n=0$. Furthermore, $e(z)=\exp (2 \pi i(z+\bar{z}))$ and $K_v(z)$ denotes the modified Bessel function of the second kind. 

Recall the following important theorem regarding the periods of the Eisenstein series in Bianchi modular forms settings. 
The function $\widetilde{H}$ below is harmonic with respect to the Riemannian structure of $\mathbb{H}_3$ given by the $\SL_2(\mathbb{C})$-invariant Riemannian metric $\frac{1}{t^2}\left(d x^2+d y^2+d t^2\right)$ $\quad(u=z+j t \in \mathbb{H}_3, z=x+i y)$.

\begin{theorem} \label{thm1}
Define the map $\Psi: \SL_2(\cO_K) \rightarrow \mathbb{C}$ by
$$
\Psi\left(\begin{array}{ll}
a & b \\
c & d
\end{array}\right)= \begin{cases}G_2(0) I(\frac{a+d}{c})-D(a, c), & c \neq 0, \\
G_2(0) I\left(\frac{b}{d}\right), & c=0\end{cases}
$$
with $I(z):=z-\bar{z}$ and
$$
D(a, c):=\frac{1}{c} \sum_{r \in K / c K} G_1\left(\frac{a r}{c}\right) G_1\left(\frac{r}{c}\right)
$$
Then
$$
\widetilde{H}(A u)=\widetilde{H}(u)+\Psi(A)
$$
for every $A$ in $\SL_2(\cO_K)$.
\end{theorem}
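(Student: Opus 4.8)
The plan is to establish the transformation formula $H(Au) = H(u) + \Phi(A)$ by first proving it for the generators of $\SL_2(\cO_L)$ and then checking that $\Phi$ satisfies the correct cocycle-type additivity so that it propagates to arbitrary words in the generators. Since $\omega = E(u,0)\,{}^t(dz, dt, d\bar z)$ is closed and $\SL_2(\cO_L)$-invariant, the difference $H(Au) - H(u)$ is locally constant on $\mathbb{H}_3$, hence constant (as $\mathbb{H}_3$ is connected); call this constant $\Phi(A)$. The assignment $A \mapsto \Phi(A)$ is then visibly a homomorphism to the additive group $\C$: from $H(ABu) = H(Bu) + \Phi(A)$ and $H(Bu) = H(u) + \Phi(B)$ we get $\Phi(AB) = \Phi(A) + \Phi(B)$. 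So the entire content of the theorem is the \emph{explicit identification} of this constant with the stated formula involving $G_2(0) I((a+d)/c)$ and the Dedekind-sum-type quantity $D(a,c)$.

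First I would treat the easy generators. For $A = \left(\begin{smallmatrix} 1 & \lambda \\ 0 & 1\end{smallmatrix}\right)$ with $\lambda \in \cO_L$, one has $Au = u + \lambda$, so $z(Au) = z + \lambda$, and plugging into the Fourier expansion of $H$: the Bessel part is invariant under $z \mapsto z + \lambda$ because $e(mn(z+\lambda)) = e(mnz)$ for $mn \in L^\vee$-pairing reasons (here $\lambda \in \cO_L$ and the product $mn$ ranges over appropriate lattice data), while the leading term $G_2(0)(z - \bar z)$ changes by $G_2(0)(\lambda - \bar\lambda) = G_2(0) I(\lambda)$. This matches $\Phi$ in the case $c = 0$, $b/d = \lambda$. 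For the inversion $S = \left(\begin{smallmatrix} 0 & -1 \\ 1 & 0\end{smallmatrix}\right)$, the computation is the genuinely substantive one: I would use the analytic continuation to $s = 0$ of the Eisenstein series $E(s;u)$ together with its functional equation / Kronecker-limit-type behavior to compare $H(Su)$ with $H(u)$. The quantity $D(a,c)$ is exactly Sczech's higher Dedekind sum, and the appearance of $\frac1c \sum_{r \in L/cL} G_1(ar/c)G_1(r/c)$ is forced by expanding $E(s; Su)$, splitting the lattice sum according to residues mod $c$, and taking the limit $s \to 0$ term by term, using the Hurwitz-type formula $G_1(x) = \lim_{s\to 0}\sum{}'(w+x)^{-1}|w+x|^{-s}$. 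This is the step where Ito's original argument (and Sczech's reciprocity for these sums) does the heavy lifting.

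Once $S$ and the translations are handled, a general $A = \left(\begin{smallmatrix} a & b \\ c & d\end{smallmatrix}\right)$ with $c \neq 0$ is reduced by the standard identity $A = \left(\begin{smallmatrix} 1 & a/c \\ 0 & 1\end{smallmatrix}\right) S \left(\begin{smallmatrix} c & d \\ 0 & 1/c\end{smallmatrix}\right)$-type decomposition — more precisely, writing $A$ as a product of a lower-left-controlled translation, $S$, and another translation modulo the diagonal torus, and using that $\Phi$ is a homomorphism. The bookkeeping requires checking that the diagonal/torus contributions are trivial on $H$ (which follows from $\SL_2(\cO_L)$-invariance of $\omega$, since diagonal units in $\cO_L$ lie in the group) and then assembling the translation-shifts and the $S$-shift: the two translation pieces contribute $G_2(0)$ times a sum of $I(\cdot)$ terms that telescopes into $G_2(0) I((a+d)/c)$, while the $S$-piece contributes the $-D(a,c)$ term. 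I would double-check the normalization $D(L) = w_1\bar w_2 - \bar w_1 w_2$ and the definition $L' = D(L)^{-1}\bar L$ enter consistently, since sign and conjugation conventions are where such formulas most easily go wrong.

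I expect the main obstacle to be the $s \to 0$ analytic-continuation bookkeeping in the inversion step: one must interchange the limit with the (conditionally convergent) lattice sum after splitting by residue classes mod $c$, justify that the "boundary" terms ($m = 0$ or $n = 0$ in the split sum) assemble exactly into the $G_2(0) I((a+d)/c)$ correction rather than contributing to $D(a,c)$, and verify that no extra constant (an $S$-independent period) sneaks in. Everything else is either formal (the homomorphism property, reduction to generators) or a direct substitution into the given Fourier expansion of $H$.
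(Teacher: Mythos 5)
A preliminary remark: the paper does not prove this statement at all. Theorem \ref{thm1} is quoted verbatim from Ito \cite{MR870309} (it is the main transformation formula of that paper) and is used here as a black box, so your proposal has to stand entirely on its own; there is no in-paper argument to compare it with.

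Your skeleton is sound as far as it goes: since $dH=\omega$ with $\omega$ closed and $\SL_2(\cO_L)$-invariant, the difference $H(Au)-H(u)$ is constant on the connected space $\mathbb{H}_3$, the resulting $\Phi$ is additive on $\SL_2(\cO_L)$, and your translation computation is correct (for $\lambda\in\cO_L$, $m\in L$, $n\in L'=D(L)^{-1}\bar L$ one checks $mn\lambda+\overline{mn\lambda}\in\Z$, so the Bessel sum is invariant and only the leading term shifts by $G_2(0)I(\lambda)$). The gaps are in the only two places where the theorem has content. First, the identification of the constant for the inversion $S$ is not carried out: you describe it as the step where ``Ito's original argument does the heavy lifting,'' which is circular when the task is to prove Ito's theorem. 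Second, and more damagingly, your reduction of a general $A=\left(\begin{smallmatrix}a&b\\c&d\end{smallmatrix}\right)$ with $c\neq0$ to generators factors $A$ through matrices such as $\left(\begin{smallmatrix}1&a/c\\0&1\end{smallmatrix}\right)$ and $\left(\begin{smallmatrix}c&d\\0&1/c\end{smallmatrix}\right)$, which do not lie in $\SL_2(\cO_L)$. For such a matrix $B$ the difference $H(Bu)-H(u)$ is \emph{not} constant ($\omega$ is only invariant under the arithmetic group), so the additivity $\Phi(AB)=\Phi(A)+\Phi(B)$ is unavailable, and the appeal to ``diagonal units'' does not apply since $c$ is not a unit. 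Even formally the bookkeeping fails: the $c=0$ formula applied to $\left(\begin{smallmatrix}c&d\\0&1/c\end{smallmatrix}\right)$ would give $G_2(0)I(cd)$, and $I(a/c)+I(cd)$ does not telescope to $I((a+d)/c)$.

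To close these gaps you must commit to one of two genuinely nontrivial routes. Either (i) Ito's route: compute $H(Au)-H(u)$ directly for arbitrary $c\neq0$ by substituting $A$ into the lattice sum defining $E(s;u)$, splitting the summation variables into residue classes modulo $c$, and taking $s\to0$ term by term with Hecke's trick to justify the interchange of limit and conditionally convergent sum --- this is precisely where $D(a,c)=\frac{1}{c}\sum_{r\in L/cL}G_1(ar/c)G_1(r/c)$ and the boundary term $G_2(0)I((a+d)/c)$ arise. Or (ii) a generators-and-relations argument over the Euclidean ring $\cO_L$ (Euclidean algorithm on $(a,c)$, not a factorization through non-integral matrices), in which case you must verify that the explicit right-hand side satisfies the cocycle identity on the relations of $\SL_2(\cO_L)$ --- and that verification is exactly Sczech's reciprocity law for the elliptic Dedekind sums $D(a,c)$, a theorem of comparable depth that would have to be proved separately. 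As written, the proposal defers precisely the part of the argument that constitutes the theorem.
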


Let $\omega_E$ be as defined above, i.e

\[
\omega_E = E(u, 0)\left(\begin{array}{l}
d z \\
d t \\
d \bar{z}
\end{array}\right).
\]
By applying the $*$ operator as defined in ~\ref{Differential forms} to the differential form $\omega_E$, we can compute the $*$ operator of $\omega_E$.
\subsection{Generlization of Ito’s differential forms ~\cite{MR870309} for subgroups of $\SL_2(\sO_K)$} \label{Itosubgroups}
Let $\{\kappa_1,\dots,\kappa_h\}$ be a complete set of representatives of the cusps of $\Gamma$. For each $i$, denote by $\Gamma_i$ to be the stabilizer of $\kappa_i$ in $\Gamma$ and $\Gamma'_i$ its maximal unipotent subgroup.
Let $\zeta$ be a cusp of $\Gamma$ such that $\zeta = A^{-1}\infty$. Define $\Gamma_\zeta$ to be the stabilizer of $\zeta$ in $\Gamma$, and its maximal unipotent subgroup
\[
\Gamma_\zeta' = \{M\in \Gamma: M\zeta = \zeta, M = I \text{ or parabolic}\}, 
\]

\[
A\Gamma_\zeta A^{-1} = \Big\{\pm\begin{pmatrix} 1 &\alpha \\  0 & 1 \end{pmatrix}: \alpha\in L_i\Big\}
\]
for a lattice $L_i$ in $\C$.

Now define the weight zero Eisenstein series for $\Gamma$ at the cusp $\kappa_i$ as
\[
E^0_A(u,s) = \sum_{\sigma\in \Gamma_i'\backslash \Gamma }t(\sigma_i \sigma(u))^{1+s}. 
\]
where $t(u)$ indicates the $t$ component of $u = (z,t)$, and similarly $z(u)$. We can replace the sum over $\Gamma_\zeta'\backslash \Gamma$ with $\Gamma_\zeta\backslash \Gamma$ if we remember to divide by the finite index $[\Gamma_i': \Gamma_i]$. From \cite{MR1483315}, we have
\begin{enumerate}
\item
the function $E_A(u,s)$ is invariant under $\Gamma$, i.e., $E_A(\gamma(u),s)=E_A(u,s)$ for all $\gamma\in\Gamma$.
\item
It satisfies the differential equation $\Delta  E_A(u,s)  = (s^2-1)E_A(u,s)$ where  $z=x+iy$ and $\Delta$ is the Laplace operator on $\tH_3$ 
\[
t^2\Big(\frac{\partial^2}{\partial x^2} +\frac{\partial^2}{\partial y^2}+ \frac{\partial^2}{\partial t^2}\Big)- t \frac{\partial}{\partial t}.
\] 
\item
It also has a Fourier expansion
\[
E_i(\sigma_j^{-1}u,s) = \delta_{ij}a_0t^{1+s} + b_0 t^{1-s} + \sum_{0\neq \alpha\in L_i'}c(\alpha,s)t K_s(2\pi|\alpha|t)e^{2\pi i \langle \alpha,z \rangle}
\]
where $L_i'$ denotes the dual lattice of $L_i$ and $\langle\cdot,\cdot\rangle$ is the inner product on $\C$. The coefficients $a_0, b_0$ are independent of $i$ and $j$, while $\delta_{ij}$ is the usual Kronecker delta function.
\item
It has a meromorphic continuation to $s\in\C$, without any pole in the half plane $\text{Re}(s)>0$ except for possibly finitely many simple poles in $(0,1]$. It has  a simple pole at $s=1$ with residue equal to $|L_i|\text{vol}(\Gamma)^{-1}$.  Here $|L_i|$ is the Euclidean area of a fundamental parallelogram of $L_i$ and $\text{vol}(\Gamma)$ is the covolume of $\Gamma$.
\end{enumerate}
The last condition can be expressed as
\[
\lim_{s\to1}(s-1)E_i(u,s) =C;
\]
where $C$ is a constant independent of $i$.

Following \cite[\S3.2]{MR1483315}, define the {\it weight $0$} Eisenstein series for $\Gamma$ at the cusp $\zeta$: 

\[
E^0_A(u,s) = \sum_{M\in \Gamma_\zeta'\backslash \Gamma }t(AMu)^{1+s};
\] where $t(u):=t$ indicates the $t$ component of $u = z+jt$. It is invariant under $\Gamma$.

\subsection{Eisenstein differential forms of weight two}
Let 
\[
J(\sigma; u) = \Big(\frac{\partial}{\partial z}, \frac{\partial}{\partial t}, \frac{\partial}{\partial \bar z} \Big) z(\sigma(u)), \qquad u\in \tH_3, \sigma\in \Gamma.
\]
Define the weight 2 Eisenstein series for $\Gamma$ at the cusp $A$
\[
G_A(u,s) = \sum_{\sigma\in \Gamma_i'\backslash \Gamma }J(\sigma; u)t(\sigma_i\sigma(u))^{1+s}, 
\]
which converges absolutely for $\mathrm{Re}(s)$ large enough. If $\kappa_j$ is another cusp of $\Gamma$, the function $u\mapsto G_A(\sigma_j u,s)$ is invariant under the action of the lattice $L_j$ corresponding to $\sigma_j\Gamma'_j \sigma_j^{-1}$.  Define the Eisenstein differential to be the complex-valued differential form 
\[
\omega_A(s) = G_A(u,s)\begin{pmatrix}dz\\ dt\\ d\bar{z}\end{pmatrix}
\]
for $\mathrm{Re}(s)$ large enough.

\begin{prop}
The differential form $\omega_A(s)$ is invariant with respect to $\Gamma$, has meromorphic continuation to $s\in \C$, and is closed at $s=0$.
\end{prop}

\begin{proof}
Let $\varphi_\gamma$ be the automorphism $u\mapsto \gamma(u)$ of $\tH_3$. From the relation
\[
J(\sigma;\gamma(u))\mathscr \J(\gamma,u) = J(\sigma\gamma; u), \qquad \gamma \in \Gamma,
\]
where $\J(\gamma, u)$ denotes the Jacobian matrix

$$
\J(\gamma,u)=\frac{\partial(z(\gamma u),t(\gamma u),\bar{z}(\gamma u))}{\partial(z,t,\bar{z})}.
$$

It follows that
\[
G_A(\gamma(u),s) \J(\gamma,u) = G_A(u,s).
\]
Then
\begin{align*}
\varphi^*_\gamma \omega_A(s)  &= G_A(\gamma(u),s) \beta(\gamma(u))\\
& = G_A(u,s)\J(\gamma,u)^{-1}  \J(\gamma, u)\beta(u) \\
&= \omega_A(s),
\end{align*}
where we have used the transformation law, hence $\omega_i(s)$ is invariant under $\Gamma$.

Writing component-wise $G_A  = (G_{A}^{(1)},G_{A}^{(2)},G_{A}^{(3)})$ and $J = (J^{(1)},J^{(2)},J^{(2)})$, for each $1\le j\le 3$ we have that 
\[
G_{A}^{(j)}(u,s)= \sum_{\sigma\in \Gamma_A'\backslash\Gamma}J^{(j)}(\sigma; u) t(\sigma_A\sigma(u))^{1+s}.
\]
More explicitly, if $\sigma(u) = (\begin{smallmatrix} * & * \\ m & n \end{smallmatrix})$, then the formulas 
\[
t(\sigma(u))= (|{mz+n}|^2+|mt|^2)^{-1}t
\]
\[
J(\sigma; u) = {(|mz+n|^2+|mt|^2)^{-2}} ((\overline{mz+n})^2, 2(\overline{mz+n})\bar{m} t, -(\bar{m} t)^2),
\]
imply that 
\[
J(\sigma;u) = t(\sigma(u))^2 \Big(\frac{(\overline{mz+n})^2}{t^2}, \frac{2(\overline{mz+n})\bar{m}}{t}, -\bar{m}^2\Big)
\]
and so for $t\gg 0 $ we have 
\[
G_{A}^{(j)}(u,s) \ll \sum_{\sigma\in \Gamma_A'\backslash\Gamma}\frac{t(\sigma_A\sigma(u))^{3+s}}{t^{3-i}} = \frac{1}{t^{3-i}} E_A(u,s+2).
\]
for each $1\le j\le 3$ with the implied constant depending on $z$. Recall that the right-hand side is a scalar-valued Eisenstein series of weight zero for $\Gamma$ at cusp $\kappa_i$.\footnote{This is denoted $E_A(P,s)$ in \cite{MR1483315}.} It follows then from Corollary 3.2.4 of \cite{MR1483315} that $G_{A}^{(j)}$ has polynomial growth at all cusps $\kappa_n$ of $\Gamma$ in the sense that
\[
G_{A}^{(j)}(\sigma_n^{-1}u,s) = O(t^K)
\]
for some constant $K>0$ uniformly with respect to $z$. Moreover, by the same argument as Proposition 3.2.5 of \cite{MR1483315} we see that each $G_{i}^{(j)}(u,s)$ satisfies the differential equation
\[
\Delta G_{A}^{(j)}(u,s) = (s^2-1)G_{i}^{(j)}(u,s)
\]
and are thus real analytic functions of $u$, holomorphic in $s$ for $\mathrm{Re}(s)$ large enough. Then from the theory of Eisenstein series \cite[\S6.1]{MR1483315} it follows that each $G_{i}^{(j)}$ has meromorphic continuation to $s\in \C$ and in particular holomorphic at $s=0$, hence so is $\omega_i(s)$. Then by a well-known result of Harder the differential form $\omega_i(0)$ is closed \cite[\S4.2]{MR892187} (see also \cite[\S2]{MR870309}).
\end{proof}

\begin{cor}
\label{Explicit}
There exists a function $H_A(u,s)$ such that $dH_A(u,0) = \omega_A(0)$ for each $A$. Also, one has $\Delta H_A(u,0) = 0$.  
\end{cor}

\begin{proof}
The existence of $H_A(u,s)$ is clear. The second assertion follows from the Bessel differential equation \eqref{bessel}.
\end{proof}
Note that $0=\Delta H_A(u,0)=\star d \star d H_A(u,0)=\star d \star d \omega_A(0)$.
If we express the Fourier expansion of $H_A(u,s)$ as simply
\[
a_0t^{1+s}+ b_0t^{1-s} + \sum_{0\neq \alpha\in L' }c(\alpha,s)tK_s(2\pi|\alpha|t)e^{2\pi i \langle \alpha,z\rangle}, \quad u = (z,t) 
\]
and for $s=0$ the term $b_0t^{1-s}$ must be replaced with $b_0t\log t$ \cite[\S3]{MR1483315}. Then by the relations
\[
\frac{d}{dr} r^s K_s(r) =  -r^s K_{s-1}(r), 
\]
for Re$(s)>-\frac12$. Then if we set $c(\alpha)=c(\alpha,0)$, we can express the Fourier coefficients of $G_i(u,0)$ as simply
\begin{align*}
c_1(\alpha) &= c(\alpha)\frac{\partial}{\partial z}e^{2\pi i \langle \alpha,z\rangle} = 2\pi i c(\alpha)\frac{\partial}{\partial z}\langle \alpha,z\rangle,\\
c_2(\alpha) &= c(\alpha)\frac{\partial}{\partial t}tK_s(2\pi|\alpha|t)= 2\pi |\alpha|c(\alpha),\\
c_3(\alpha) &= c(\alpha)\frac{\partial}{\partial \bar{z}}e^{2\pi i \langle \alpha,z\rangle}= 2\pi i c(\alpha)\frac{\partial}{\partial \bar{z}}{\langle \alpha,z\rangle}.
\end{align*}
We refer to the recent paper of Miao-Nguyen-Wong~\cite{MR4642603} about explicit $H_A$.

We can compute $d \left( \star \omega_i \right) $ using the formula given in \S~\ref{Differential forms} and considering the cusp $i=\infty$.
In this case 
\[
(F_0, F_1, F_2)= (G_{A}^{(1)},G_{A}^{(2)},G_{A}^{(3)})
\]
We have
\[
d \left( \star (F \cdot \beta)\right)=i H_F \beta_0 \wedge \beta_1 \wedge \beta_2;
\]
with the function $H_F$ given by 
\begin{align*}
H_F= t\left( \frac{\partial \overline{G_{A}^{(2)}}}{\partial z}+ \frac{1}{2}\frac{\partial \overline{G_{A}^{(1)}}}{\partial t}+ \frac{\partial \overline{G_{A}^{(3)}}}{\partial z}\right). 
\end{align*}
We expect that   $\int\limits_{\Ga \backslash \tH_3} d \left( \star (F \cdot \beta)\right)$ can be computed using Rankin-Selberg ``unfolding".

 \subsection{Hida's differential forms }
Consider  Hida's  differential form at the cusp $A=\infty$ ~\cite[\S $10$]{MR1272981}:
 \begin{center}
$$
\omega_{E}:=\omega_{A(u;s)}= \sum\limits_{\gamma \in \Gamma_{\infty}\backslash \Gamma } \frac{t^s \cdot e_{1} \cdot \rho_{2} (j(\gamma,u)^{t})^{-1}}{(|cz+d|^2+|ct|^2)^s}  du;
$$
\end{center}
where 
\begin{itemize}
 \item 
 $e_{1}=(1,0,0)$ is the unit vector, 
 \item 
  $ du=(dz,-dt,-d\bar{z})$ is a differential $1$ form, 
  \item 
  $\rho_{2}$ = $Sym^2 (\mathbb{C}^2 )$ is the second  symmetric tensor
representation of the standard representation of $\SL_{2}(\mathbb{C})$ on $\mathbb{C}^2$.
\item 
For $\gamma=\left(\begin{array}{ll}
a & b \\
c & d\end{array}\right) \in \Gamma_{\xi}\backslash \Gamma$ and $u=\left(\begin{array}{cc}z & -t \\ t & \bar{z}\end{array}\right) \in \mathbb{H}_3$,
the modular factor is defined by
\[
j(\gamma; u)=\left(\begin{array}{cc}
c z+d & -c t \\
\overline{c} t & \overline{cz+d}
\end{array}\right).
\]
\end{itemize}
In particular, Hida is using $Sym^2 j(\gamma; u) $ rather than $Sym^2 j(\gamma; u)^{-1} $.
By applying the Hodge $*$ operator (cf. ~\ref{Differential forms}) to the differential form $\omega_E$, we can compute the $*\omega_E$ for Hida's Eisenstein differential form at $s=0$.

\section{Inner product formula for Bianchi modular forms}
\label{innerprod}
We assume in this section $K$ is an imaginary quadratic field of class number {\it one} that is also an Euclidean domain. Recall that that fundamental domains are described in ~\cite[Chapter 7]{MR1483315}, ~\cite[Chapter 3]{aranes2010modular}. 
\subsection{Quasi-periods}
\begin{definition}
(Quasi-periods of Bianchi modular forms)
Choose an edge of the  Fundamental domain connecting $\infty$ to two different points $P_1$ and $P_2$ of the floor of the Bianchi domain~\cite[p. 70]{aranes2010modular}. 
Now write $\{P_i, j\infty \}$ as the translate $h_i \{0, \infty\}$ with $ h\in \Ga$ as in \cite{JCthesis}. This is very crucial for our computation and the reason we assume that
$K$ is an Euclidean domain. 
 For a Bianchi modular form $F=(F_0, F_1, F_2) :\tH_3 \rightarrow \C^3$ and $u:=z+tj \in \tH_3$.
 
 For example choose the point $P_{1}={(\frac{1}{2},0,\frac{\sqrt{3}}{2})}$ for $K=\Q(i)$. We integrate over simply connected domain $\tH_3$. When integrating with respect to $dz$ and $d\bar{z}$, we consider $t$ component as constant. Similarly, when integrating with respect to $dt$, we take $z$ component as constant, and then we obtain $z=\frac{1}{2},\; \bar{z}=\frac{1}{2}\;$ and $t=\frac{\sqrt{3}}{2}$ :
\[
\pi_{F_{0}}(z):=\int_{\frac{1}{2}}^{z} {F_0(z',t)}  \frac{dz'}{t}  
 \text{\; \;for arbitrary variable point \;} (z',t) \in \mathbb{H}_3;
 \]
\[\pi_{F_{1}}(t):=\int_{\frac{\sqrt{3}}{2}}^{t} {F_1(z,t')}  \frac{dt'}{t'}   \text{\; \;for arbitrary variable point \;} (z,t') \in \mathbb{H}_3;
\] 
\[
\pi_{F_{2}}(\bar{z}):=\int_{\frac{1}{2}}^{\bar{z}} {F_2(z',t)}  \frac{d\bar{z'}}{t} \text{\; \;for arbitrary variable point \;} (z',t) \in \mathbb{H}_3;
\]
and
\[
\pi_F:= \pi_{F_{0}}(z) +   \pi_{F_{1}}(t)+\pi_{F_{2}}(\bar{z}). 
\]
Then, we have 
\[
d\pi_F= d\pi_{F_{0}}(z) +   d\pi_{F_{1}}(t)+d\pi_{F_{2}}(\bar{z}). 
\]

We have chosen $z'$ and $t'$ arbitrarily. For the sake of better notation, we replace $z'$ with $z$ and $t'$ with $t$, i.e., $u = (z, t)\in \mathbb{H}_3$;

\[
d\pi_F= {F_{0}}(u)\frac{dz}{t}  +   F_{1}(u)\frac{dt}{t} +F_{2}(u)\frac{d\bar{z}}{t}
\]

with $P_1$, $P_2$, and $P_3$ are points in the fundamental domain defined in $\S~\ref{2.1}$. 
\end{definition}
Choose functions $\pi_{F_i}$ such that we have 
\[
d ( \pi_{F_{i}})={F_i(u)}  \beta_i.
\]

We prove the following formula that works for any arbitrary subgroup $\Ga \leq G:=\SL_2(\Z[i])$ of finite index. This is a generalization of the inner product formula of Banerjee-Merel ~\cite{banerjee2022eisenstein} for imaginary quadratic fields. 

\begin{prop}
\label{pinnerprod}
Let  $F, S \in M_{2}(\Ga)$ be two Bianchi modular forms, with at least one of them being a cusp form with  $F=(F_0, F_1, F_2)$ and $S=(S_0, S_1, S_2)$.  
Consider the function
\[
H:= i F_0  \overline{S_0} +\frac{i}{2}F_1 \overline{S_1}+iF_2 \overline{S_2}. 
\]
The inner product of these two modular forms  is given by $<F, S>=I$
with
\begin{align*}
 I =&\frac{1}{12[G:\Gamma]} \sum_{g \in \Gamma \backslash G}  \int\limits_{g0}^{g\infty} \int\limits_{\partial{\cF}} H (\beta_0\wedge \beta_1 \wedge \beta_2)
\end{align*}
where $\partial{\cF}$ is the boundary of fundamental domain $\cF$ and $h \in \Ga$.
\end{prop}
\begin{proof}
 By ~\S~\ref{Bianchidefinition}, the inner product is given by 
\begin{eqnarray*}
 <F, S>
  &:= & \frac{1}{12i[G:\Gamma]}\int\limits_{\Gamma \backslash \overline{\tH}_3} F \cdot \beta  \wedge *  (S \cdot \beta)\\
    &= & \frac{1}{12i[G:\Gamma]}\sum\limits_{g \in \Gamma \backslash G} \int\limits_{\cF}  F|_g \cdot \beta  \wedge *  (S|_g \cdot \beta).
\end{eqnarray*}
In the above equations,  $\cF$ is the fundamental domain as in ~\cite[diagram 4.2]{JCthesis} with 
boundary $\partial(\cF)$. Integration over the boundary of the fundamental domain $\partial{\cF}$, as defined in ~\ref{boundary}. Now write $\{P_1, j\infty \}$ as the translate $h \{0, \infty\}$ with $ h\in \Ga$ as in \cite{aranes2010modular}. 

Consider the  integral 
\[
\int\limits_{\cF}  F|_g \cdot \beta  \wedge *  (S|_g \cdot \beta).
\]
 Choose a quasi-period  such that $ d ( \pi_{F})=F \cdot \beta$.

Note that $d(\pi_F \cdot (*(S \cdot \beta))
= d( \pi_F  ) \wedge  *  (S \cdot \beta)
+ \pi_F.d( *  (S \cdot \beta))$. We know that $d(* (S \cdot \beta))=0$ because $S$ is harmonic function. Observe that
\begin{align*}
d(\pi_F \cdot (*(S.\beta))
&=d( \pi_F  ) \wedge  *  (S \cdot \beta)
+ \pi_F \cdot d( *  (S \cdot \beta)), \\
d(\pi_F \cdot (*(S.\beta))
&= d( \pi_F  ) \wedge  *  (S \cdot \beta)
+ 0,\\
d(\pi_F \cdot (*(S.\beta))
&=F\cdot \beta \wedge  *  (S \cdot \beta).
\end{align*}

By Stokes theorem, we have 
\begin{align*}
\int\limits_{\Gamma \backslash \overline{\tH}_3} F \cdot \beta  \wedge *  (S \cdot \beta)= \int\limits_{\Gamma \backslash \overline{\tH}_3}d\left(\pi_F.*(S\cdot\beta)\right)\\
= \sum\limits_{g \in \Gamma \backslash G} \int\limits_{\cF}  d\left(\pi_{F_{|_g}}.*(S|_g \cdot\beta)\right)\\
= \sum_{g \in \Gamma \backslash G} \int\limits_{\partial(\cF)} \pi_{F_{|_g}}.*(S|_g \cdot \beta).
\end{align*}
Hence, we deduce that
\begin{center}
\begin{align*}  
\sum\limits_{g \in \Gamma \backslash G}   \left( \int\limits_{\partial (\cF)} \pi_{F_{|_g}}.(*(S|_g \cdot \beta)\right)
&= 
\sum\limits_{g \in \Gamma \backslash G} \left( \int\limits_{\partial (\cF)} \pi_{F_{|_g}}.( -\frac{1}{2}i\bar{S}_1(\beta_0\wedge\beta_2) +i\bar{S}_0(\beta_1\wedge\beta_2) +i\bar{S}_2(\beta_0\wedge\beta_1)\right)\\
&=
\sum\limits_{g \in \Gamma \backslash G} \left( \int\limits_{\partial(\cF)} -\frac{1}{2}i\pi_{F_{1{|_g}}}\bar{S}_1(\beta_0\wedge\beta_2) +i\pi_{F_{0{|_g}}}\bar{S}_0(\beta_1\wedge\beta_2) +i\pi_{F_{2{|_g}}}\bar{S}_2(\beta_0\wedge\beta_1) \right)\\
&=\sum_{g \in \Gamma \backslash G} \left( -\frac{1}{2}i\int_{\frac{\sqrt{3}}{2}}^{\infty}{F_{1{|_g}}\beta_1}\int\limits_{\partial{\cF}}\bar{S}_1(\beta_0\wedge\beta_2) +i\int\limits_{\frac{1}{2}
}^{\infty}{F_{0{|_g}}}\beta_0 \int_{\partial{\cF}}\bar{S}_0(\beta_1\wedge\beta_2) \right) \\
&+ \sum_{g \in \Gamma \backslash G} \left( i\int_{\frac{1}{2}}^{\infty}{F_{2{|_g}}\beta_2}\int_{\partial{\cF}}\bar{S}_2(\beta_0\wedge\beta_1) \right)\\
&=\sum_{g \in \Gamma \backslash G} \left( -\frac{1}{2}i\int_{0}^{\infty}{F_{1{|_{gh}}}\beta_1}\int_{\partial{\cF}}\bar{S}_1(\beta_0\wedge\beta_2) +i\int_{0}^{\infty}{F_{0{|_{gh}}}\beta_0}\int_{\partial{\cF}}\bar{S}_0(\beta_1\wedge\beta_2) \right) \\
&+ \sum\limits_{g \in \Gamma \backslash G} \left( i\int_{0}^{\infty}{F_{2{|_{gh}}}\beta_2}\int_{\partial{\cF}}\bar{S}_2(\beta_0\wedge\beta_1) \right).
\end{align*}
 \end{center}

As a result, we deduce that
\begin{center}
\begin{align*}
 I  &= \frac{1}{12[G:\Gamma]} \sum_{g \in \Gamma \backslash G} \left( -\frac{1}{2}\int_{0}^{\infty}{F_{1{|_{gh}}}\beta_1}\int_{\partial{\cF}}\bar{S}_1(\beta_0\wedge\beta_2) +\int_{0}^{\infty}{F_{0{|_{gh}}}\beta_0}\int_{\partial{\cF}}\bar{S}_0(\beta_1\wedge\beta_2) \right) \\
& +\frac{1}{12[G:\Gamma]} \sum_{g \in \Gamma \backslash G} \left( \int_{0}^{\infty}{F_{2{|_{gh}}}\beta_2}\int_{\partial{\cF}}\bar{S}_2(\beta_0\wedge\beta_1) \right)
\end{align*}
\end{center}
As $g$ varies in $\Gamma \backslash G$ so do $gh$ for a fixed $h \in G_{P_1}$.

\begin{center}
\begin{align*}
 I  &= \frac{1}{12[G:\Gamma]}\sum_{g \in \Gamma \backslash G} \left( -\frac{1}{2}\int_{g0}^{g\infty}{F_1\beta_1}\int_{\partial{\cF}}\bar{S}_1(\beta_0\wedge\beta_2) +\int_{g0}^{g\infty}{F_0\beta_0}\int_{\partial{\cF}}\bar{S}_0(\beta_1\wedge\beta_2) \right) \\
  & + \frac{1}{12[G:\Gamma]}\sum_{g \in \Gamma \backslash G} 
 \int_{g0}^{g\infty}{F_2\beta_2}\int_{\partial{\cF}}\bar{S}_2(\beta_0\wedge\beta_1) 
\end{align*}
\end{center}
By collecting the terms with respect to the volume form $\beta_0\wedge\beta_1 \wedge \beta_2$, we deduce that 
\[
I= \frac{1}{12[G:\Gamma]}\sum_{g \in \Gamma \backslash G} \int_{g0}^{g\infty} \int_{\partial{\cF}} H (\beta_0\wedge \beta_1 \wedge \beta_2)
\]
\end{proof}
 We now prove our Theorem ~\ref{Main-thm}. 
 \begin{proof}
For an Eisenstein series $E \in E_2(\Ga)$, define 
 \[
 F_E(g):=  \int\limits_{\partial{\cF}} *  (E \cdot \beta) [g]. 
\]
Using ~\cite[theorem 1]{MR743014}, consider the modular symbol 
$\sE_E \in \HH_1(X^{BB}_\Ga, \partial(X^{BB}_\Ga); \C):=\HH_1(X^{BB}_\Ga, \partial(X^{BB}_\Ga); \Q)\otimes \C$ as in the statement of the theorem:
\[
\sE_E:=\sum_{g \in \Ga \backslash G} F_E(g) \eta(g). 
\]
We compute the integral as follows:
\[
I= \frac{1}{12[G:\Gamma]}\int\limits_{\sE_E} F \cdot \beta 
 =  \frac{1}{12[G:\Gamma]} \sum\limits_{g \in \Ga \backslash G}F_E(g) \int\limits_{\eta(g)} F \cdot \beta. 
 \]
 
Consider two weight two Bianchi modular forms $F=(F_0, F_1, F_2)$ and $E=(E_0, E_1, E_2)$ in $M_2(\Ga)$.
Consider the function 
\[
H:= i F_0  \overline{E_0} +\frac{i}{2}F_1 \overline{E_1}+iF_2 \overline{E_2};
\]
as in  Proposition~\ref{pinnerprod}.

By Proposition~\ref{pinnerprod}, we have 
\begin{eqnarray*}
<F, E>  &= & \frac{1}{12[G:\Gamma]}\sum_{g \in \Ga \backslash G} \int_{\cF}  H|_g \beta_0 \wedge \beta_1\wedge \beta_2. 
 \end{eqnarray*}

We then have 
\begin{align*}
\int\limits_{\sE_E} F \cdot \beta
=&
\sum\limits_{g \in \Ga \backslash G} F_E(g) \int\limits_{\eta(g)} F \cdot \beta\\
=&
\sum_{g \in \Ga \backslash G}  \int\limits_{\eta(g)} \int_{\partial{\cF}} H|_g \beta_0 \wedge \beta_1 \wedge \beta_2.\\
\end{align*}
By Proposition~\ref{pinnerprod}, the right-hand side  is equal to 
\[
-12 i [G:\Ga] \langle F, E \rangle = 0. 
\]
Since, by definition,  Eisenstein modular forms are defined to be the complement of cusp forms. 
\end{proof}
We now compute the integral a bit explicitly under certain assumptions. Recall that by Corollary~\ref{Explicit}; we have: 
\[
dH_E=\omega_E;
\]
for a function (degree zero differential form) $H_E$.
\begin{prop}
\label{nonzero}
Assume that $\delta(H_E \omega) $ is a non-vanishing function on the Riemannian two manifold $\partial{\cF}$ then we have $F_E(I) \neq 0$. 
\end{prop}
\begin{proof}
We then have $\star \omega_E=\star dH_E$. Let $\omega$ be the non-vanishing top form on the Riemannian manifold $Y_{\Ga}$. By our assumption on $H_E$, we have  $H_E \omega$ is a top form on the Riemannian $3$ manifold.
Observe that 
\[
\delta(H_E \omega)=-\star d \star (H_E \omega)=-\star d H_E =-\star \omega_E
\]
By assumption, we have  $\delta(H_E \omega)$ is a volume form on $2$ dimensional submanifold $\partial{\cF}$.
Hence, we have 
\[
\int\limits_{\partial{\cF}} \star \omega_E=-\int\limits_{\partial{\cF}} \delta(H_E \omega) \neq 0.
\]

\end{proof}
Note that the assumption is reasonable since $\eta(z) \neq 0$ for all $z$ in the upper half plane. 
\subsection{Eisenstein elements  for $\Ga_0(p)$}

Consider the subgroup $\Ga=\Ga_0(p)$ for a prime $p$ of $\sO_K$ that is inert. In this case, there are only two cusps $[0]$ and $[\infty]$ similar to the classical 
case and denote the corresponding modular curve by $Y_0(p)$. There are two Eisenstein series.  
Define
\[
\partial{\cF}(p):=\int\limits_{\partial{\cF}}  \star d H_{E}.
\]

We compute the boundary of the Eisenstein element explicitly in this case. 
\begin{prop}
Consider the congruence subgroup $\Ga_0(p)$ with $p$ as above.  Under the assumption on $H_E$ as above. The boundary of the Eisenstein element is non-zero and determined by $\partial{\cF}(p)$. 
\end{prop}
\begin{proof}
\begin{align*}
\delta(\sE_{E})&=& \sum_{g \in \Ga \backslash \SL_2(\sO_K)} \int_{\partial{\cF}} \star \omega_{E}[g] ([g0]-[g\infty])\\ 
&=& \left(\sum_{g \in \Ga \backslash \SL_2(\sO_K)} \int_{\partial{\cF}} \star \omega_{E}[g]\right) ([0]-[\infty])\\
&=& \left(\int\limits_{Y_0(p)} \star \omega_{E}\right) ([0]-[\infty])\\ 
&=& \left(\int\limits_{Y_0(p)} \star d H_{E}\right) ([0]-[\infty])\\ 
\end{align*}
for a function $H_E$ as in Corollary~\ref{Explicit}.
In particular, it shows that $\delta(\sE_{E}) \neq 0$ if $\int\limits_{Y_0(p)}  \star d H_{E}=[\SL_2(\sO_K):\Ga_0(p)]\partial{\cF}(p)\neq 0$ by Proposition~\ref{nonzero}. 
\end{proof}
\subsection{Different pairings of homology and cohomology groups}
 Let $\Ga$ be a torsion-free 
subgroup like $\Ga_1(N)$ with $N$ large enough. Consider  the compactification $X \in \{X^{BB}_{\Ga}, X^{BS}_{\Ga} \}$ of the Riemmanian $3$ manifold $Y_{\Ga}$. 
We have isomorphisms:
\[
S_2(\Ga) \simeq \HH^1(X; \C) \simeq \HH_1(X; \C)
\]
The first isomorphism is obtained by the map $F \rightarrow F \cdot \beta$ and then using classical deRham's theorem. 
The second isomorphism is given by the duality induced by the evaluation pairing $(\omega, \gamma) \rightarrow \int\limits_{\gamma} \omega$.

We also have a pairing~\cite[p. 474]{MR1272981}, 
\[
<,> \HH_c^1(Y_{\Ga};\C) \times \HH^2(Y_{\Ga}; \C) \rightarrow \C
\]

We also have the following isomorphism (cf.~\cite[p. 288]{MR4569265}):

$$
\begin{array}{l}
\HH_{1}(X, \partial(X);\C) \simeq \HH_c^1(Y_{\Ga};\C) \simeq \HH^{2}(Y_{\Gamma};\C).
\end{array}
$$

With the identification $\HH^1_{dR}(Y_{\Ga}) \simeq  \HH^1(Y_{\Ga};\C)$, we can make this pairing explicit:
\[
 \HH^1_c(Y_{\Ga};\C) \times \HH_1(X, \partial(X);\C) \rightarrow \C.
\]
The map is given again by the same evaluation pairing $(\omega, \gamma) \rightarrow \int\limits_{\gamma} \omega$.

Recall the formulation of Poincar\'e-Lefeschtz duality \cite[p. 53]{MR1288523}.  In this case, the intersection 
pairing will be given by  
\[
\circ: \HH_{1}(X, \partial (X);\Z) \times \HH_2(Y_{\Gamma};\Z) \rightarrow \Z. 
\]

The Eisenstein element $\sE_E$ corresponding to the weight two Eisenstein modular form $E$   is  the  unique element such that $\sE_E \circ c=\int_c \star (E \cdot \beta)$ 
for all $c \in \HH_2(Y_{\Gamma};\Z)$. We expect these two definitions of Eisenstein elements to coincide.

Using the Eichler-Shimura-Harder correspondence, we know that $S_2(\Gamma)$ is isomorphism $\HH^1_{cusp}(X;\mathbb{C})$ via the map $F$ to $F \cdot \beta$. We know $\HH^1(X;\C) = \HH^1_{cusp}(X;\C) \oplus \HH^1_{Eis}(X;\C),$ and if $E$ is an Eisenstein series, then $E \cdot \beta$ is a differential form belonging to $\HH^1_{Eis}(X; \C).$

\subsection{Eisenstein elements are retractions}
Since we are using Bailey-Borel-Satake compactification for homology groups, we have a short exact sequence:
\[
0=\HH_1(\partial(X^{BB}_\Gamma);\Z)  \rightarrow  \HH_1(X^{BB}_\Gamma;\Z) \rightarrow 
\HH_1(X^{BB}_\Gamma, \partial(X^{BB}_\Gamma);\Z) \xrightarrow[]{\delta} \Z[ \partial(X^{BB}_{\Gamma})]
  \rightarrow \Z \rightarrow 0,
\]
where the map $\HH_1(X^{BB}_\Gamma, \partial(X^{BB}_\Gamma);\Z) \xrightarrow[]{\delta} \Z[ \partial(X^{BB}_{\Gamma})]$ is obtained from the boundary map 
\[
\delta:\HH_1(X^{BB}_\Gamma, \partial(X^{BB}_\Gamma);\Z) \rightarrow \HH_0( \partial(X^{BB}_\Gamma);\Z).
\]
The exact sequence above splits over the field $\C$. We have a retraction map: 
\[
R: \HH_1(X^{BB}_\Ga, \partial(X^{BB}_\Ga); \C) \rightarrow \HH_1(X^{BB}_\Ga; \C)=\Hom_\C(\HH^1(X^{BB}_{\Ga}; \C),\C)
\] 
given by $R(c)(\omega)=\int\limits_c \omega$. 

Note that this is a section of the inclusion map. Hence if we tensor with $\C$, we have 
$\delta(x)=0$ if and only if $R(x)=x$. That is same as $x \in \HH_1(X^{BB}_\Gamma;\C) $. 

By definition of Eisenstein element, we have $R(\sE_E)=0$.  Note that $\sE_E \neq 0$ as $F_E(I) \neq 0$ for $I$ the identity element by Proposition~\ref{nonzero} under certain assumption. Hence, we deduce that 
$\delta(\sE_E) \neq 0$ with the same assumption. 

\section{Eisenstein Cohomology groups for $\Ga_1(N)$}
\subsection{Dimension of Eisenstein cohomology}
Recall the definition of Eisenstein cohomology groups following  Seng\"un-T\"urkelli ~\cite{MR3464620}. 
We compute the cohomology groups of  Borel-Serre compactification of $Y_{\Ga}$. This is a compactifcation obtained by adding a $2$ torus to every cusp (except for $\mathbb{Q}( i )$ or $\mathbb{Q}(\sqrt{-3})$).
The Borel-Serre compactification ~\cite[appendix]{MR272790} 
$X^{BS}_{\Gamma}$ of $Y_{\Gamma}$ is a compact three manifold with boundary $\partial (X^{BS}_{\Gamma})$ whose interior is homeomorphic to $Y_{\Gamma}$.

Given $k \geqslant 0$, let the space of homogeneous polynomials of degree $n$ on the variables $x, y$ with complex coefficients be denoted by $\C[x, y]_{k}$. The modular group $\SL_2(\C)$ acts on this space in the obvious way permitted by the two variables. Consider the $\mathrm{SL}_{2}(\mathbb{C})$-module
\[
M_{k}:=\mathbb{C}[x, y]_{k} \otimes_{\mathbb{C}} \overline{\mathbb{C}[x, y]_{k}}
\]
where the overline on the second factor indicates that the action on this factor is twisted with complex conjugation. 
Considered as a module, $M_k$ gives rise to a locally constant sheaf $\mathcal{M}_{k}$ on $Y_{\Gamma}$ whose stalks are isomorphic to $M_k$.

In Sengün-Türkelli~\cite{MR3464620}, they use the notation $E_{k,k}$ instead of $M_k$ and for the sheaf $\sE_k$ instead of $\mathcal{M}_k$.

Considered as a module, $M_k$ gives rise to a locally constant sheaf $\mathcal{M}_{k}$ on $Y_{\Gamma}$ whose stalks are isomorphic to $M_k$. Consider the long exact sequence
\[
\ldots \rightarrow \HH_{c}^{i}\left(Y_{\Gamma} ; \mathcal{M}_{k}\right) \xrightarrow{incl^i} \HH^{i}\left(X^{BS}_{\Gamma} ; \overline{\mathcal{M}}_{k}\right) \xrightarrow{res^i} \HH^{i}\left(\partial(X^{BS}_{\Gamma}) ; \overline{\mathcal{M}}_{k}\right) \rightarrow \ldots
\]
where $\HH_{c}^{i}$ denotes compactly supported cohomology and $\overline{\mathcal{M}}_{k}$ is a natural extension of $\mathcal{M}_{k}$ to $X_{\Gamma}$.
Image of the compactly supported cohomology by the inclusion map $incl$ inside the whole cohomology group is called the cuspial cohomology group. 
Note that this is the same as the kernel of the restriction map $res$.

By the Eichler-Shimura-Harder isomorphism theorem, This consists of all cohomological cuspidal automorphic representations (cusp forms) \cite[p. 409]{MR4356858}. 
Note that the long exact sequence associated with the pair $\left(X^{BS}_{\Gamma}, \partial( X^{BS}_{\Gamma})\right)$ as above is compatible with the action of the involution $\tau$ from Section~\ref{Lefschetz numbers}.

\begin{definition}
\label{Eisensteincuspidal}
(Eisenstein cohomology groups)
The kernel of the restriction map is known as cuspidal cohomology $\HH_{cusp}^i$. The complement of cuspidal cohomology within $\HH^i$ is Eisenstein cohomology $\HH_{Eis}^{i}$. This is isomorphic to the image of the restriction map inside the cohomology of the boundary.
\end{definition}

The decomposition $\HH^{i}=\HH_{cusp}^{i} \oplus \HH_{Eis}^{i}$ respects the Hecke action.
By construction, the embedding $Y_{\Gamma} \hookrightarrow X_{\Gamma}$ is a homotopy invariance.  We have the following isomorphisms
\begin{align*}
\label{Groupcoh}
\HH^{i}\left(X^{BS}_{\Gamma} ; \overline{\mathcal{M}}_{k}\right) \simeq \HH^{i}\left(Y_{\Gamma} ; \mathcal{M}_{k}\right) \simeq \HH^{i}\left(\Gamma ; M_{k}\right).
\end{align*}
Note that the last one is the group cohomology. 
Via the above isomorphisms, we define the cuspidal (respectively Eisenstein parts) of the cohomology groups $\HH^{i}\left(\Gamma; M_{k}\right)$. Cuspidal parts (respectively Eisenstein part) consisting of elements that are zero (non-zero)  on parabolic elements of $\Gamma$. In other words, these are in kernel (not in kernel)
of $\mathrm{res}^i$.
We denote the  Eisenstein part of the group cohomology by $\HH_{Eis}^{1}(\Gamma; M_{k})$. We study this in our paper. 

The boundary $\partial(X^{BS}_{\Gamma})$ is a disjoint union of $2$-tori, each corresponds to a cusp of $Y_{\Gamma}$ (except for $\mathbb{Q}( i )$ or $\mathbb{Q}(\sqrt{-3})$). The set of cusps of $\Gamma$ can be identified with the orbit space $\Gamma \backslash \mathbb{P}^{1}(K)$.
 It is well known that when $\Gamma$ is the full Bianchi group, the number of cusps is $h(K)$, the class number of $K$. Recall that we assumed $h(K)=1$. 
Let $\mathcal{C}_{\Gamma}$ be the set of cusps of the congruence subgroups of the form $\Gamma$.
We are grateful to Professor Sengün for the outline of the proof (see also~\cite{MR3091734} for full subgroups). 
\begin{prop} 
~\cite[Proposition 4.1. p. 247]{MR3464620}
Let $\Gamma$ be a congruence subgroup of a Bianchi group.  Then
$$
\begin{gathered}
\operatorname{dim} \HH^{0}\left(\partial (X^{BS}_{\Gamma}); \overline{\mathcal{M}}_{k}\right)=\operatorname{dim} \HH^{2}\left(\partial(X^{BS}_{\Gamma}); \overline{\mathcal{M}}_{k}\right)=\# \mathcal{C}_{\Gamma} \\
\operatorname{dim} \HH^{1}\left(\partial(X^{BS}_{\Gamma}); \overline{\mathcal{M}}_{k}\right)=2 \cdot \# \mathcal{C}_{\Gamma}.
\end{gathered}
$$
\end{prop}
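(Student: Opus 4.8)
The plan is to reduce the computation of the boundary cohomology to a local computation at each cusp, using the fact that $\partial X_\Gamma$ is a disjoint union of $2$-tori indexed by the cusps in $\mathcal{C}_\Gamma$. Concretely, writing $\partial X_\Gamma = \coprod_{\xi \in \mathcal{C}_\Gamma} T_\xi$ where each $T_\xi$ is a $2$-torus, the additivity of cohomology over disjoint unions gives $\HH^i(\partial X_\Gamma, \overline{\mathcal{E}}_k) \simeq \bigoplus_{\xi} \HH^i(T_\xi, \overline{\mathcal{E}}_k|_{T_\xi})$. So it suffices to show that for each cusp $\xi$, the local contribution is $1$-dimensional for $i = 0, 2$ and $2$-dimensional for $i = 1$.

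The key step is then to identify $\HH^i(T_\xi, \overline{\mathcal{E}}_k|_{T_\xi})$ with the group cohomology $\HH^i(\Gamma_\xi, E_k)$, where $\Gamma_\xi$ is the stabilizer of $\xi$ in $\Gamma$ (equivalently, $\pi_1(T_\xi)$ acts on the stalk). Since $T_\xi$ is a $2$-torus, its fundamental group is $\Z^2$; as stated in the excerpt, ``the fundamental group of a $2$-torus is a free abelian group on two generators.'' Thus we must compute $\HH^i(\Z^2, E_k)$ where $\Z^2$ acts on $E_k = \C[x,y]_k \otimes \overline{\C[x,y]_k}$ through the unipotent image of $\Gamma_\xi$ in $\SL_2(\C)$ (up to conjugation, a cusp stabilizer is contained in the Borel, and after passing to a finite-index torsion-free situation its semisimple part is trivial, so the action factors through a group of unipotent matrices). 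The group cohomology of $\Z^2$ with any coefficient module $M$ is computed by the Koszul complex $M \xrightarrow{(\partial_1,\partial_2)} M^2 \xrightarrow{(-\partial_2,\partial_1)} M$, giving $\HH^0 = M^{\Z^2}$, $\HH^2 = M_{\Z^2}$ (the coinvariants), and $\HH^1$ fitting into the middle. One then checks that for the unipotent action on $E_k$ the invariants $E_k^{\Z^2}$ and coinvariants $(E_k)_{\Z^2}$ are each $1$-dimensional — spanned respectively by the unique (up to scalar) highest-weight-type vector fixed by the unipotent and its dual — and that by Euler characteristic considerations ($\chi(\Z^2, M) = 0$ since $\Z^2$ has a $2$-torus as classifying space) $\dim \HH^1 = \dim\HH^0 + \dim\HH^2 = 2$. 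Summing over the $\#\mathcal{C}_\Gamma$ cusps yields the stated formulas.

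The main obstacle I expect is the local invariant-theory computation: verifying that the space of invariants (and, dually, coinvariants) of $E_k$ under the unipotent cusp-stabilizer action is exactly one-dimensional, independent of $k$. This requires pinning down precisely how $\Gamma_\xi$ embeds into $\SL_2(\C)$ — its image is generated by two commuting unipotent (or finite-order, but in the torsion-free case unipotent) elements, all fixing a common line in $\C^2$ — and then running the standard computation for the action of a one-parameter unipotent subgroup on $\mathrm{Sym}^k \otimes \overline{\mathrm{Sym}^k}$: a single unipotent $\bigl(\begin{smallmatrix}1&*\\0&1\end{smallmatrix}\bigr)$ on $\C[x,y]_k$ has a one-dimensional space of invariants (spanned by $x^k$), and tensoring with the conjugate factor and intersecting the invariants of both generators of $\Z^2$ still gives dimension one as long as the two unipotents share the same fixed line (which they do, being in a common Borel). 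The edge cases $K = \Q(i)$ and $K = \Q(\sqrt{-3})$, where some cusps carry extra automorphisms and the boundary component is not literally a torus, are explicitly set aside by the hypothesis that $\Gamma$ is a congruence subgroup small enough to be torsion-free, or are handled by the parenthetical remark already in the excerpt; I would note this but not belabor it.
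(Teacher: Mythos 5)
Your proposal is correct and follows exactly the route the paper intends: the paper omits the proof entirely, remarking only that the boundary components are $2$-tori whose fundamental group is $\Z^2$, so that the computation reduces to the group cohomology $\HH^i(\Z^2, E_{k,k})$ via invariants, coinvariants, and the vanishing Euler characteristic --- which is precisely what you carry out. One correction to your justification of the crucial local step: the joint invariants of the two unipotent generators on $\C[x,y]_k\otimes\overline{\C[x,y]_k}$ are one-dimensional \emph{not} because the two unipotents share a common fixed line (a single unipotent $\bigl(\begin{smallmatrix}1&\omega\\0&1\end{smallmatrix}\bigr)$ already has $(k+1)$-dimensional invariants on this tensor product, since it acts through $\omega$ on the first factor and through $\bar\omega$ on the conjugate factor), but because the two periods $\omega_1,\omega_2$ of the cusp lattice are $\R$-linearly independent: invariance under both then forces invariance under the derivations $x\partial_y$ and $\bar x\partial_{\bar y}$ separately, leaving only the line spanned by $x^k\otimes\bar x^k$.
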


 It follows from algebraic topology that for $k>0$, the image of the restriction map.
\[
\HH^{i}\left(X^{BS}_{\Gamma}; \overline{\mathcal{M}}_{k}\right) \rightarrow \HH^{i}\left(\partial 
(X^{BS}_{\Gamma}); \overline{\mathcal{M}}_{k}\right)
\]
is onto when $i=2$ and its image has half the rank of the target space when $i=1$. Hence, we have the following result. 

\begin{prop} \label{prop8}
Let $\Gamma$ be a congruence subgroup as above. Then

if $k=0$,
\begin{enumerate}
\item
$\operatorname{dim} \HH_{Eis}^{0}\left(X^{BS}_\Gamma; \C\right)=1$
\item
$ \operatorname{dim} \HH_{Eis}^{1}\left(X^{BS}_\Gamma;\C\right)=\# \mathcal{C}_{\Gamma}$ 
\item
$ \operatorname{dim} \HH_{Eis}^{2}\left(X^{BS}_\Gamma;\C\right)=\# \mathcal{C}_{\Gamma}-1$.
\end{enumerate}

If $k>0$,
\begin{enumerate}
\item
$\operatorname{dim} \HH_{Eis}^{0}\left(X^{BS}_\Gamma; \overline{\mathcal{M}}_{k}\right)=0$
\item
$ \operatorname{dim} \HH_{Eis}^{i}\left(X^{BS}_\Gamma;\overline{\mathcal{M}}_{k}\right)=\# \mathcal{C}_{\Gamma}$ for $i=1,2$.
\end{enumerate}

\end{prop} 
\begin{proof}
Consider the long exact sequence associated with $(X^{BS}_{\Gamma},\partial( X^{BS}_\Gamma)$:
\begin{align*}
\ldots \rightarrow \HH^1_c(Y_{\Gamma}; {\mathcal{M}}_{k}) \xrightarrow{incl^1} \HH^1(X^{BS}_{\Gamma}; \overline{\mathcal{M}}_{k}) \xrightarrow{res^1} \HH^1(\partial (X^{BS}_{\Gamma}); \overline{\mathcal{M}}_{k}) \xrightarrow{}
\HH^2_c(Y_{\Gamma}; \mathcal{M}_{k})\\ \xrightarrow{incl^2} \HH^2(X^{BS}_{\Gamma}; \overline{\mathcal{M}}_{k}) \xrightarrow{res^2} \HH^2(\partial(X^{BS}_{\Gamma});\overline{\mathcal{M}}_{k}) 
\xrightarrow{} \HH^3_c(Y_{\Gamma}; \mathcal{M}_{k}) \xrightarrow{} \HH^3(X^{BS}_{\Gamma}; \overline{\mathcal{M}}_{k})=0.
\end{align*}
The subscript $"_c"$ denotes compactly supported cohomology. Using the Poincaré duality:
\[
\HH_c^i(Y_{\Ga};{\mathcal{M}}_{k}) \times \HH^{3-i}(X^{BS}_{\Ga}; \overline{\mathcal{M}}_{k}) \rightarrow \C,  \; \; \; \; \;   \text{for}  \;\text{i} \in \{0,1,2,3\},
\] 
we have $\HH^3_c(Y_\Gamma; \mathcal{M}_{k})$ is isomorphic to $\HH^0(X^{BS}_\Ga;\overline{\mathcal{M}}_{k})$.

Consider the map:
\begin{center}
$res^2:\HH^2(X^{BS}_{\Gamma};\overline{\mathcal{M}}_{k}) \rightarrow \HH^2(\partial (X^{BS}_{\Gamma});\overline{\mathcal{M}}_{k})$
\end{center}

If $k=0$ then $\mathcal{M}_{k}=\C$ is a constant sheaf, we get
\begin{center}
$ \HH^0(X^{BS}_{\Gamma};\C) = \C$ 
\end{center}
Using Proposition~\ref{prop8},  we deduce that $\dim_{\C}\HH^0_{Eis}(X^{BS}_{\Gamma};\C)=1$. 
By Poincar\'e duality, we have  $\HH^3_c(Y_{\Gamma}; \C)\simeq \HH^0(X^{BS}_{\Gamma};\C),$
 this implies $\HH^3_c(Y_{\Gamma}; \C) \simeq \C$.

When $\mathcal{M}_{k} \simeq \C$ is a trivial sheaf, the image of the restriction map $res^2$
has one dimensional cokernel  by Harder ~\cite[Proposition 4.7.1]{MR633658}. 

Hence, we have $\dim_{\C}\HH^2_{Eis}(X^{BS}_{\Gamma};\C)$ is $\# \mathcal{C}_\Ga-1$.  Consider the long exact sequence associated with $(X^{BS}_{\Gamma},\partial( X^{BS}_\Gamma)$:

\begin{align*}
0 \rightarrow \HH^0(X^{BS}_{\Gamma}; \C) \xrightarrow{res^0} \HH^0(\partial (X^{BS}_{\Gamma}); \C) \xrightarrow{\partial^0}  \HH^1_c(Y_{\Gamma}; \C) \xrightarrow{incl^1} \HH^1(X^{BS}_{\Gamma}; \C)  \xrightarrow{res^1} \HH^1(\partial (X^{BS}_{\Gamma}); \C) \\ \xrightarrow{\partial^1}
\HH^2_c(Y_{\Gamma}; \C) \xrightarrow{incl^2} \HH^2(X^{BS}_{\Gamma}; \C) \xrightarrow{res^2} \HH^2(\partial(X^{BS}_{\Gamma});\C) 
\xrightarrow{\partial^2} \HH^3_c(Y_{\Gamma}; \C) \xrightarrow{incl^3} \HH^3(X^{BS}_{\Gamma}; \C)=0.
\end{align*}

By successive use of the rank-nullity theorem, we get the following:
\begin{enumerate}
\item $\dim_{\C} \HH^0(\partial(X^{BS}_{\Gamma}); \C)= \dim \text{Im } res^0 + \dim \text{Im } \partial^0 \implies \#\mathcal{C}_\Ga= 1 + \dim \text{Im } \partial^0$ 
\item $\dim_{\C} \HH^1_c( Y_\Ga; \C)= \dim \text{Im } \partial^0 + \dim \text{Im } incl^1 $
\item $\dim_{\C} \HH^1( X^{BS}_{\Gamma}; \C)= \dim \text{Im } incl^1 +\dim \text{Im } res^1 $
\item $\dim_{\C} \HH^1(\partial(X^{BS}_{\Gamma}); \C)= \dim \text{Im } res^1 + \dim \text{Im } \partial^1 \implies
2 \cdot \#\mathcal{C}_\Ga=  \dim \text{Im } res^1 + \dim \text{Im } \partial^1$
\item $\dim_{\C} \HH^2_c( Y_\Ga; \C)= \dim \text{Im } \partial^1 + \dim \text{Im } incl^2 $
\item $\dim_{\C} \HH^2( X^{BS}_{\Gamma}; \C)= \dim \text{Im } incl^2 +\dim \text{Im } res^2$
\item $\dim_{\C} \HH^2(\partial(X^{BS}_{\Gamma}); \C)= \dim \text{Im } res^2 + \dim \text{Im } \partial^2 \implies
 \#\mathcal{C}_\Ga=  \dim \text{Im } res^2 + \dim \text{Im } \partial^3$\\
 $ \implies \# \mathcal{C}_\Ga = \dim \text{Im } res^2 + 1$  \; \;  ( because $\HH^3_c(Y_\Ga;\C) = \C)$ \\
 $\implies \dim \text{Im } res^2 = \# \mathcal{C}_\Ga - 1.$ 
\end{enumerate}

Using Poincar\'e duality;
\begin{center}
    $\HH^1_c(Y_\Ga;\C) \iso \HH^2(X^{BS}_\Ga; \C)$\\
    $\dim \text{Im } \partial^0 + \dim \text{Im } incl^1 = \dim \text{Im } incl^2 +\dim \text{Im } res^2 $\\
 $  \# \mathcal{C}_\Ga - 1 + \dim \text{Im } incl^1 = \dim \text{Im } incl^2 + \# \mathcal{C}_\Ga - 1 $\\
 $\implies  \dim \text{Im } incl^1 = \dim \text{Im } incl^2 ;$
\end{center}
 and 
 \begin{center}
      $\HH^2_c(Y_\Ga;\C) \iso \HH^1(X^{BS}_\Ga; \C);$\\
      $\dim \text{Im } \partial^1 + \dim \text{Im } incl^2 =\dim \text{Im } incl^1 +\dim \text{Im } res^1 ;$\\
    $  \implies  \dim \text{Im } \partial^1 = \dim \text{Im } res^1. $
 \end{center}
 We know that  $\dim \HH^1(\partial(X^{BS}_{\Gamma}); \C)= \dim \text{Im } res^1 + \dim \text{Im } \partial^1$
\[ \implies
2 \cdot \#\mathcal{C}_\Ga=  \dim \text{Im } res^1 + \dim \text{Im } \partial^1\]
\[ \implies
2 \cdot \#\mathcal{C}_\Ga=  \dim \text{Im } res^1 + \dim \text{Im } res^1
\]
\[ \implies
2 \cdot \#\mathcal{C}_\Ga= 2\cdot \dim \text{Im } res^1 
\]
\[\implies \dim \text{Im } res^1 = \#\mathcal{C}_\Ga \] 
\[\implies \dim_{\C}  \HH^1_{Eis}(X^{BS}_{\Gamma}; \C) = \# \mathcal{C}_\Ga.\]

If $k> 0$ then $\mathcal{M}_{k}$ is a locally constant sheaf, we get
\begin{center}
 $\HH^3_c(Y_{\Gamma}; {\mathcal{M}}_{k}) = \HH^0(X^{BS}_{\Gamma};\overline{\mathcal{M}}_{k}), \; \;
 \HH^0(X^{BS}_{\Gamma};\overline{\mathcal{M}}_{k}) \simeq \HH^0(\Ga;M_{k})$ 
\end{center}
we know
$\HH^0(\Ga;M_{k})=0$, this implies \; $\HH^0(X^{BS}_{\Gamma};\overline{\mathcal{M}}_{k})=0$ and $ \HH^3_c(Y_{\Gamma}; {\mathcal{M}}_{k}) = 0.$

For $k>0$, the restriction map is surjective.
 we analyze the image of the restriction maps following Harder ~\cite[Proposition 4.7.1]{MR633658}.  From loc. cit., the image of the restriction map
\[
\HH^{i}\left(X^{BS}_{\Gamma}; \overline{\mathcal{M}}_{k}\right) \rightarrow \HH^{i}\left(\partial (X^{BS}_{\Gamma}); \overline{\mathcal{M}}_{k}\right)
\]
is onto when $i=2$ and its image has half the rank of the target space when $i=1$. So we have
\begin{center}
   $ \operatorname{dim} \HH_{Eis}^{1}\left(X^{BS}_\Gamma;\overline{\mathcal{M}}_{k}\right)=\frac{1}{2} \operatorname{dim} \HH^{1}\left(\partial X^{BS}_{\Gamma}; \overline{\mathcal{M}}_{k}\right)$ \\
    $ \operatorname{dim} \HH_{Eis}^{2}\left(X^{BS}_\Gamma;\overline{\mathcal{M}}_{k}\right)= \operatorname{dim} \HH^{2}\left(\partial X^{BS}_{\Gamma}; \overline{\mathcal{M}}_{k}\right)$ 
   
\end{center}
 \begin{center}
  This implies  $\operatorname{dim} \HH_{Eis}^{i}\left(X^{BS}_\Gamma;\overline{\mathcal{M}}_{k}\right)=\# \mathcal{C}_{\Gamma} \quad$ for \;$i=1,2$.
  \end{center}
 
\end{proof}
This just shows that the dimension of the Eisenstein cohomology is determined by the number of cusps. 

\subsection{Computation of number of cusps for $\Ga_1(N)$}
By \cite[p. 165]{MR799662}, the set of cusps for $\SL_2(\sO_K)$ can be identified 
with the class group of $K$. Recall that we assumed that the class number of the imaginary number field $K$ is one. 

For the rest of the paper, we are interested in congruence subgroups of the form $\Ga=\Ga_1(\mfa)$ and denote the corresponding cuspidal group by 
$\mathcal{C}_N$ for an ideal $\mfa=(N)$. 
Let $P:=\left\{\pm\left[\begin{array}{cc}1 & j \\ 0 & 1\end{array}\right]: j \in \cO_K \right\}$ be the parabolic subgroup of $\mathrm{SL}_2(\cO_K)$. Note that 
$\bar{P}=\left\{\pm\left[\begin{array}{ll}1 & j \\ 0 & 1\end{array}\right]: j \in \cO_K / N \cO_K \right\}$ and $\Bar{P}_{+}=\left\{\left[\begin{array}{ll}1 & j \\ 0 & 1\end{array}\right]: j \in \cO_K \right\}$ is the "positive" half of $P$. Here,  the overbar signifies reduction modulo $(N)$.
For $N\in \N$, let $\mathcal{C}_{N}$ be the set of cusps of congruence subgroups of the form $\Gamma_1(N )$. 
\begin{prop}
\label{Cuspdescription}
\[ 
\#\mathcal{C}_N = | \bar{P}_{+} \backslash \mathrm{SL}_2(\cO_K / N \cO_K) / \bar{P} |
\]
\end{prop}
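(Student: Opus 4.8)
The plan is to realize $\mathcal{C}_N$ as a double coset space for $G=\SL_2(\cO)$ and then push everything down modulo $(N)$. Since we are in the class number one setting, $G$ acts transitively on $\mathbb{P}^1(K)$, and the stabilizer of the point $\infty=[1:0]$ is exactly $P$ (this is the full stabilizer when $\cO^{\times}=\{\pm1\}$; the fields $\Q(i)$ and $\Q(\sqrt{-3})$ have to be handled by the usual separate bookkeeping). Consequently
\[
\mathcal{C}_N=\Gamma_1(N)\backslash\mathbb{P}^1(K)=\Gamma_1(N)\backslash G/P,
\]
and it remains to match this with $\bar P_+\backslash\SL_2(\cO/N\cO)/\bar P$.

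The input I would invoke is that reduction modulo $(N)$ defines a surjection $\pi\colon G=\SL_2(\cO)\twoheadrightarrow\SL_2(\cO/N\cO)$ with kernel $\Gamma(N)$ (surjectivity is standard since $\cO$ is a Dedekind domain, so $\SL_2$ of the semilocal ring $\cO/N\cO$ is generated by elementary matrices, which lift). A direct matrix computation gives $\pi(\Gamma_1(N))=U(\cO/N\cO)=\bar P_+$ — every element of $\Gamma_1(N)$ has $a\equiv d\equiv1$, $c\equiv0$, hence reduces to $\psmat{1}{*}{0}{1}$, and conversely each such matrix visibly lifts into $\Gamma_1(N)$ — and similarly $\pi(P)=\bar P$. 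Now $\Gamma(N)=\ker\pi$ is normal in $G$ and contained in $\Gamma_1(N)$, so the assignment $\Gamma_1(N)g\mapsto\bar P_+\,\pi(g)$ is a well-defined bijection of right $G$-sets
\[
\Gamma_1(N)\backslash G\;\simeq\;\bar P_+\backslash\SL_2(\cO/N\cO),
\]
where $G$ acts on the right-hand side through $\pi$; well-definedness, injectivity and surjectivity each follow from $\ker\pi\subseteq\Gamma_1(N)$ together with the surjectivity of $\pi$ and of $\pi|_{\Gamma_1(N)}$ onto $\bar P_+$.

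Finally, the residual right action of $P$ on $\bar P_+\backslash\SL_2(\cO/N\cO)$ is through $\pi(P)=\bar P$, and since $\pi|_P\colon P\twoheadrightarrow\bar P$ is onto, the $P$-orbits and the $\bar P$-orbits coincide. Therefore
\[
\mathcal{C}_N=\Gamma_1(N)\backslash G/P\;\simeq\;\bigl(\bar P_+\backslash\SL_2(\cO/N\cO)\bigr)/P=\bar P_+\backslash\SL_2(\cO/N\cO)/\bar P,
\]
and comparing cardinalities yields the Proposition. The one genuinely delicate point is that $\Gamma(N)$ is \emph{not} contained in $P$, so one cannot symmetrically reduce both factors of the double coset at once; the argument must first take the left quotient by $\Gamma_1(N)$ (which does contain $\Gamma(N)$), identify the result with $\bar P_+\backslash\SL_2(\cO/N\cO)$, and only then take the right quotient by $P$, using that this last action factors through $\bar P$. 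The secondary subtlety, the identification of $\mathrm{Stab}_G(\infty)$ with $P$, is automatic away from $\Q(i)$ and $\Q(\sqrt{-3})$ and is otherwise treated as in the standard literature on cusps of Bianchi groups.
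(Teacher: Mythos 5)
Your argument is correct and follows essentially the same route as the paper: identify $\mathcal{C}_N$ with the double coset space $\Gamma_1(N)\backslash \SL_2(\cO)/P$ and then reduce modulo $(N)$, using $\Gamma(N)=\ker\pi\subseteq\Gamma_1(N)$ and $\pi(\Gamma_1(N))=\bar P_+$, $\pi(P)=\bar P$ (the paper packages the same reduction by first passing to $\Gamma(N)\backslash G/P\simeq\SL_2(\cO/N\cO)/\bar P$ and then using the coset decomposition $\Gamma_1(N)=\bigcup_j\bigl(\begin{smallmatrix}1&j\\0&1\end{smallmatrix}\bigr)\Gamma(N)$). Your explicit caveat about the stabilizer of $\infty$ exceeding $P$ for $\Q(i)$ and $\Q(\sqrt{-3})$ is a point the paper glosses over and is worth keeping.
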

\begin{proof}

The group $\SL_2(\sO_K)$
acts transitively on the set of cusps. 
Recall that  $P=\mathrm{SL}_2(\cO_K)_{\infty}$ is the subgroup of $\mathrm{SL}_2(\cO_K)$ fixing the cusp  $\infty$. The map
$$
\Gamma_{1}(N) \backslash \mathrm{SL}_2(\cO_K) / P \longrightarrow\{\text { cusps of } \Gamma_{1}(N)\}
$$
given by
\[
\Gamma_{1}(N) \alpha P \mapsto \Gamma_{1}(N) \alpha(\infty)
\]
is a bijection ~\cite{MR2112196} (the proof works for imaginary quadratic fields also since the class number is one).  We know  that $\mathrm{SL}_2(\cO_K) / P$ identifies with $ K \cup\{\infty\}$, so that the double coset space $\Gamma_{1}(N) \backslash \mathrm{SL}_2(\cO_K) / P$ 
get identified with the cusps $\Gamma_{1}(N) \backslash({K} \cup\{\infty\})$.

The double coset space $\Gamma(N) \backslash \SL_2(\cO_K) / P$ is naturally viewed as $\mathrm{SL}_2(\cO_K / N \cO_K) / \bar{P}$ where $\bar{P}$ denote the projected image of $P$ in $\mathrm{SL}_2(\cO_K / N \cO_K)$, that is 
\[
\bar{P}=\left\{\pm\left[\begin{array}{ll}1 & j \\ 0 & 1\end{array}\right]: j \in \cO_K / N \cO_K \right\}.
\]
We have a decomposition $ \Gamma_{1}(N)$ = $\bigcup_j\left[\begin{array}{ll}1 & j \\ 0 & 1\end{array}\right] \Gamma(N)$,
the double coset space is naturally viewed as $\bar{P}_{+} \backslash \mathrm{SL}_2(\cO_K / N \cO_K) / \bar{P}$ where $\Bar{P}_{+}=\left\{\left[\begin{array}{ll}1 & j \\ 0 & 1\end{array}\right]: j \in \cO_K \right\}$ is the "positive" half of $P$ and again the overbar signifies reduction modulo $(N)$. 

We deduce that
\[
 | \bar{P}_{+} \backslash \mathrm{SL}_2(\cO_K / N \cO_K) / \bar{P} | = \# \mathcal{C}_{N}. 
 \]

\end{proof}

The cusps of $\Gamma_{1}\left(p^n\right)$ are in bijection with the sets 
$\{\pm(\bar{x}, \bar{y})\} \subset\left(\mathcal{O  } /\left(p^n\right)\right)^2$
such that the order of $(\bar{x}, \bar{y})$ is $p^n$.   The bijection is defined via the map $\frac{x}{y} \mapsto(y,-x)$ with $x\text{ modulo} \ gcd(y,N)$. 

\begin{cor}

$\#\mathcal{C}_{p} = p^2 -1$
\end{cor}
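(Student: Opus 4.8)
The plan is to apply Proposition~\ref{Cuspdescription} with $N=p$ and then count the double coset space $\bar P_{+}\backslash \mathrm{SL}_2(\cO/p\cO)/\bar P$ directly. In the inert case of interest (cf. Theorem~\ref{cohomology2}, where the class number is one and $p$ is inert) the ring $k:=\cO/p\cO$ is the finite field $\F_{p^{2}}$, so $\bar P_{+}=U(k)$ is the group of upper unipotent matrices, of order $p^{2}$, while $\bar P=\{\pm I\}\cdot U(k)$ has order $2p^{2}$ since $p$ is odd.

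First I would use the bijections already recorded in the excerpt. The quotient $\mathrm{SL}_2(k)/U(k)$ is identified with the set of nonzero column vectors $(a,c)\in k^{2}$ (every such pair is unimodular because $k$ is a field), the coset $gU(k)$ corresponding to the first column of $g$. Passing further to $\mathrm{SL}_2(k)/\bar P$ amounts to dividing by the right action of $\{\pm I\}$, which negates the column $(a,c)$; equivalently one checks $\#\bigl(\mathrm{SL}_2(k)/\bar P\bigr)=(p^{2}+1)\cdot\tfrac{p^{2}-1}{2}=\tfrac{p^{4}-1}{2}$ via the fibration $\mathrm{SL}_2(k)/\bar P\to \mathbb P^{1}(k)$ with fibre $B(k)/\bar P\cong k^{*}/\{\pm 1\}$.

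It then remains to quotient by the left action of $\bar P_{+}=U(k)$, which on columns is $(a,c)\mapsto(a+jc,\,c)$ for $j\in k$. I would split into the two strata $c=0$ and $c\neq 0$. For $c=0$ the vectors $(a,0)$ with $a\in k^{*}$ are fixed by $U(k)$, and modulo the sign $a\sim -a$ they yield $\tfrac{p^{2}-1}{2}$ classes. For $c\neq 0$, for each fixed $c\in k^{*}$ the set $\{(a,c):a\in k\}$ is a single $U(k)$-orbit, and the sign identifies the orbit of $c$ with that of $-c$, giving another $\tfrac{p^{2}-1}{2}$ classes. Summing, $\#\mathcal{C}_{p}=\tfrac{p^{2}-1}{2}+\tfrac{p^{2}-1}{2}=p^{2}-1$.

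I expect the only delicate point to be the bookkeeping of the two commuting quotients: one should note that the left $U(k)$-action and the right $\{\pm I\}$-action commute because $-I$ is central, so they may be performed in either order, and that $c$ being preserved up to sign keeps the two strata from interacting. This is also the step where being inert is used, since it makes $k$ a field with no zero divisors; for split or ramified $p$ the same strategy applies verbatim except that an extra stratum of pairs $(a,c)$ with $c$ a nonzero zero divisor enters the count.
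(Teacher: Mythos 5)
Your proof is correct, and it takes a somewhat different route from the paper's. You work directly from the double-coset formula of Proposition~\ref{Cuspdescription}: identifying $\SL_2(k)/U(k)$ with nonzero first columns $(a,c)\in k^2$ for $k=\cO/p\cO\cong\F_{p^2}$, stratifying by $c=0$ versus $c\neq 0$, and then quotienting by the left unipotent action and the central sign. The paper instead invokes a classical-style parametrization of the cusps of $\Gamma_1(p)$ by pairs $(x,y)$ with $y$ a unit and $x$ taken modulo $\gcd(y,p)$, and counts only that stratum without performing the $\pm$-identification; as literally written it omits the $y=0$ stratum and the sign quotient (two effects that happen to cancel), so your stratified count is the more transparent and self-contained of the two. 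Your version also has the virtue of making explicit where the hypotheses enter: inertness of $p$ is used so that $k$ is a field (every nonzero $c$ is invertible, so $\{(a,c):a\in k\}$ is a single $U(k)$-orbit, and there is no stratum of nonzero zero-divisors), and oddness of $p$ is used so that $-I\neq I$ and the sign action is free on both strata, making the division by $2$ exact. These hypotheses are indeed in force where the corollary is applied (Theorem~\ref{cohomology2}), though the corollary as stated suppresses them — a point worth flagging, as you do.
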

\begin{proof}
We know $\cO_K / p \cO_K$ is isomorphic to $ \mathbb{Z} /p \mathbb{Z} \oplus \mathbb{Z} /p \mathbb{Z} $ and number of non-zero choices of $y  \in \mathbb{Z} /p \mathbb{Z} \oplus \mathbb{Z} /p \mathbb{Z}$ is $p^2-1$ then $gcd(y,p)=1$.
Possibilities of $x$ is $x \ \text{modulo} \ gcd(y,p)$ and $gcd(y,p)=1$ then $x \ \text{modulo} \ 1$. The number of choices of $x$ is $1$, and the number of choices for $(x,y)$ is $p^2-1$.
\end{proof}
\subsection{Szech cocycles and some expectations}
\label{Szech}
The classical Dedekind sums arise from the homomorphism
\[
\phi: \Gamma \to \C
\]
for $\Gamma = SL_2(\Z)$, given by periods of Eisenstein series
\[
\phi=\phi_E: \gamma \mapsto \int_{[\gamma]}\omega=   \int_{z_0}^{\gamma z_0} E(z) dz 
\]
where $E(z)$ is the unique Eisenstein series (non-holomorphic modular form) of weight two on $\Gamma$.

Mazur's work uses the analogous Rademacher homomorphism for $\Gamma = \Gamma_0(N)$ and $\Ga(N)$, while Merel studies the case $\Gamma = \Gamma_0(N) $, where in each case $E(z)$ is replaced by the holomorphic Eisenstein series of weight two on $\Gamma(N)$ and $\Gamma_0(N)$ respectively. These periods are described by Dedekind sums and, therefore, are seen to be integral.  The work of Merel and the first author shows that the Eisenstein cycle can be written as a linear combination of Manin symbols $\xi$, 
\[
\sE_E = \sum_{x\in \mathbb P^1(\Z/N\Z)} F_E(x) \xi(x)= \sum_{x\in \Ga_0(N) \backslash \SL_2(\Z)} F_E(x) \xi(x).
\]
where the coefficients are shown to be a scalar multiple of $\phi_E$, and therefore can be given in terms of Dedekind sums, thus Bernoulli numbers and special values of $L$ functions. 

Over an imaginary quadratic field \cite{MR870309} showed that for $\Gamma = SL_2(\mathcal O_K)$, the period is again described by elliptic Dedekind sums \cite{scz} introduced by Sczech as discussed above. In the case of congruence subgroups of the form $\Ga_1(N)$, however, the analogue of elliptic Dedekind sums have not yet been defined to the best of our knowledge. 
 The explicit Sczech $1$-cocycles~\cite{MR850124} (see also Sengün-Türkelli~\cite[\S 4.2.3, p. 252]{MR3464620}) can be used to produce a basis for group cohomology $\HH_{Eis}^{1}(\Gamma; \mathbb{C})$ for some subgroup $\Ga \leq \SL_2(\sO_K)$. 

 Szech cocycles are expected to be computed using the cocycle $\phi_E$ associated to the Bianchi Eisenstein
 modular form $E$ of weight two.
 For a lattice $L=\sO_K$, consider the subgroup  
 \[
 \Gamma(u,v) = \{ A \in \SL_{2}(\cO_K) | (u,v)A=(u,v) \}  \text{ for  } (u,v) \in (\mathbb{C}/L)^2\}.
 \]
 Note that $\Ga_1(N)$ is a subgroup of $ \SL_{2}(\cO_K)$
of the form $\Gamma(u,v)$ for $(u,v) = (0,1) \in\left(\frac{1}{N} \cO_K / \cO_K\right)^{2}$ .  
On the other hand,  it is easy to see that the congruence subgroup $\Gamma_{0}(\cP)$ is not a subset of the form  $\Gamma(u,v)$ for all non-zero $(u,v) \in (\C/L)^2$ for a prime ideal $\cP$ be a prime ideal of $\cO_K$. 

We are interested in subgroups of the form $\Ga_1(N)$ for the rest of the paper in the setting of  ~\cite[Theorem 9, p. 101]{MR850124}.

Sczech shows that if the congruence subgroup is of the form $\Ga(u,v)$ for a {\it fixed} $(u, v) \in\left(\frac{1}{N} \cO_K / \cO_K\right)^{2}$ like $\Ga_1(N)$, we can define the collection of homomorphisms $\Psi(u, v)$ with $(u, v) \in\left(\frac{1}{N} \cO_K / \cO_K\right)^{2}$ that live in the Eisenstein part of the cohomology and that the number of linearly independent such homomorphisms is equal to the number of cusps of $\Gamma$. Thus they generate $\HH_{Eis}^{1}(\Gamma_1(N); \mathbb{C})$.
Note that the above Eisenstein series associated with different cusps are linearly independent because they are non-vanishing only at their associated cusp. This implies that the cohomology classes of Sczech cocycles, which are associated with the cusps of $\Gamma_{1}(N)$ form a basis of $\HH_{Eis}^1(\Gamma_{1}(N); \mathbb{C})$.

We can define the analogue of the Rademacher homomorphism for $\Gamma$ 
$$ 
\phi_E: \gamma \mapsto \int_{\gamma \cF}d\left( * \omega_E \right);
$$
where $E$ is an Eisenstein series of weight two on $\Gamma$. We strongly believe there are connections with Szech cocycles as discussed in \S~\ref{Szech}.  
These are generalizations of period functions 
defined for the full subgroup (cf. \S~\ref{Bianchidefinition} Theorem~\ref{thm1}). Our belief comes from the inner product formula proved above.

 We now list some properties of the Eisenstein series that we believe are connections between generalized Dedekind sums and period integrals of the Eisenstein series. 
 \begin{enumerate}
     \item Ito  ~\cite{MR870309} showed that (see also Weselmann ~\cite{MR953667}) up to a coboundary, the cocycles of Sczech are integrals of closed harmonic differential forms.
     \item These differential forms are given by certain Eisenstein series defined on the hyperbolic space $\mathbb{H}_3$.
     \item  Following Ito, we form an Eisenstein series $E_{(u, v)}(\tau, s)$ for $(\tau, s) \in \mathbb{H}_3\times \mathbb{C}$ with values in $\mathbb{C}^{3}$ associated to each cusp of $\Gamma_1(N)$. As a function of $s, E_{(u, v)}(\tau, s)$ can be analytically continued to all of $\mathbb{C}$.
     \item  Harder ~\cite{MR633658} shows that the differential $1$-form on the hyperbolic $3$-space induced by $E_{(u, v)}(\tau, s)$ is closed for $s=0$. Ito 
~\cite{MR870309} showed that the cocycle given by the integral of this closed differential $1$-form differs from the cocycle $\Psi(u, v)$ of Sczech by a coboundary.
 \end{enumerate}

\section{Lefschetz numbers for complex conjugations}
\label{Lefschetz numbers}

Recall the study  Lefschetz fixed point theorem for a congruence subgroup following Seng\"un-T\"urkelli ~\cite{MR3464620}.

\subsection{Lefschetz number of $\sigma$ for $\Gamma_1(N)$}
Let $\Gamma = \Gamma_1(N) \leq \SL_2(\mathcal{O}_K)$ be a congruence subgroup of level ($N$). For sufficiently large N (for example, if $N > 3$), the congruence subgroup $\Gamma_1(N)$ is torsion-free (cf. ~\cite[\S5.1]{MR2887610}). In this section, we use ~\cite[Theorem 2.1]{MR3464620} to calculate the Lefschetz numbers for the 
congruence subgroups of the form $\Gamma_1(N)$. 

Let $\operatorname{tr}\sigma^i$ is the trace of the map $\sigma^i:\HH^i(X^{BS}_\Gamma; \mathcal{M}_k) \rightarrow \HH^i(X^{BS}_\Gamma; \mathcal{M}_k)$ induced by $ \sigma$
on the $i$ th cohomology groups. Recall that Lefschetz number for $\sigma$ is defined as
\begin{center}

 $L\left(\sigma, \Gamma_1(N), M_{k}\right)$:= $\sum\limits_{i=0}^{\infty} (-1)^i  \operatorname{tr} \left( \sigma^i \right).$  
\end{center}
 Let $\chi((X^{BS}_\Gamma)^\sigma)$ be the Euler-Poincar\'e characteristic of the fix point set $(X^{BS}_\Gamma)^\sigma$ of $\sigma$ acting on $X^{BS}_\Gamma$.

Then $X_1(N)^{\sigma}$ consists of translations of surfaces $F\left(\gamma_{1}\right)$ and $F\left(\gamma_{1}^{\prime}\right)$ and the number of translations of these surfaces are denoted by $A_1(N)$ and $B_1(N)$.

The set of places of $K$ will be denoted by $V$, and $V_\infty $ (resp. $V_f$) refers to the set of archimedean (resp. non-archimedean) places of $K.$

For all finite places $v \in V_f$,  let $\mathfrak{p}_v$ be the prime ideal corresponding to $v$ and $N_v:= [K_v:\mathbb{Q}_p]$ if $v|p$.
Consider the congruence subgroup $\Gamma= \Gamma_1(N) \subset \SL_2(\mathcal{O}) $ for an ideal $N\mathcal{O} =\prod \mathfrak{p}_v^{j_v}$. The completion $\Gamma_v\left(j_v\right)$ of  $\Gamma (=\Gamma_1(\mathrm{N}))$ in $\SL_2\left(\mathcal{O}_v\right), v \in V_f$, is a  subgroup of
$\SL_2\left(\mathcal{O}_v\right)$.
 Then one has $\mathfrak{p}_v \cap \mathbb{Z}=p \mathbb{Z}$ for some prime $p$, and $p \mathcal{O}_v=\mathfrak{p}_v^{e_v}$. Define
$$
s_v:=\left[\frac{e_v}{p-1}\right]+1 .
$$
Let $M(s)$ =$\begin{pmatrix}
  a & b\\
  c & d
\end{pmatrix}$ , $s \in \mathbb{N}$, denote the set of $(2  \times 2 )$-matrices with $a, c , d$ in $\mathfrak{p}_v^s$ and $b$ in $\mathcal{O}_v$. For all $s \geqslant s_v$, the exponential map defines a bijection exp: $M(s) \rightarrow 1+M(s)$. Then the map exp induces for any $s \geqslant s_v$ a bijection
$$
\mathfrak{sl}_2\left(\mathcal{O}_v\right) \cap M(s) \stackrel{\sim}{\rightarrow} \Gamma_v(s)
$$
where $\mathfrak{sl_2\left(\mathcal{O}_v\right)}$ denotes the $\mathcal{O}_v$-Lie algebra of $\SL_2\left(\mathcal{O}_v\right)$. 

Now, we are generalizing the results mentioned in the work of Rohlfs-Schwarmer ~\cite{MR1480883} from subgroups of symplectic groups to subgroups of the Bianchi group $\SL_2(\mathcal{O})$.

\begin{lemma}
Local cohomology groups for a subgroup $\Ga \leq \SL_2(\sO)$ can be computed as follows:
\begin{enumerate}
\item
If $\mathfrak{p}_v \mid p$ and if $p>2$ then $\HH^1\left(\sigma; \Gamma_v\left(j_v\right)\right)=\{1\}$ for $j_v=0,1,2, \ldots$
\item
If $\mathfrak{p}_v \mid 2$ and if $j_v \geqslant s_v+e_v$ then cardinality of $\HH^1\left(\sigma; \Gamma_v\left(j_v\right)\right)=2^{ N_v}$ for some $N_v \in \N$. The inclusion $\Gamma_v\left(j_v\right) \rightarrow$ $\Gamma_v\left(j_v-e_v\right)$ induces the trivial map
$$
\HH^1\left(\sigma; \Gamma_v\left(j_v\right)\right) \rightarrow \HH^1\left(\sigma; \Gamma_v\left(j_v-e_v\right)\right) .
$$
\end{enumerate}
\end{lemma}
\begin{proof} 

The proof is exactly the same as in loc. cit. In this case, we use the unipotent subgroup
$U(\mathcal{O}_v/2\mathcal{O}_v):= \Bigg\{ \left(\begin{array}{cc}
a & b \\
c & -a
\end{array}\right) \in M_2(\mathcal{O}_v) \Bigg|  a,c \in \mathcal{O}_v/2\mathcal{O}_v$  $\text{and}$ $b \in \mathcal{O}_v \Bigg\} $ instead.

\end{proof}
 \begin{lemma}
 \label{AB}
 Let $\sigma$ and $\Ga$ be as above. For all $\gamma \in \{\gamma_1,\gamma'_1\}$,  the cohomology groups $\HH^1(\sigma;  ^{\gamma}\Ga)$ are 
 $2$ groups. 
 \end{lemma}
 \begin{proof}
We recall the  relation from Rohlfs ~\cite[page $201$]{MR507801}:

  $$
\HH^1(\sigma; \Gamma)= \prod_{v}\HH^1(\sigma;  \Gamma_{v}(j_v)).
$$

Using $\HH^1\left(\sigma;  \Gamma_v\left(j_v\right)\right)=2^{ N_v}$, we get

$$
\HH^1(\sigma; \Gamma)= \prod_{v}\HH^1(\sigma;  \Gamma_{v}(j_v)) = \prod_{v} 2^{N_v}.
$$

 The  $\gamma$- twisted $\sigma$ action on $\Gamma $ produces the following short exact sequence:
 \[
0 \rightarrow {}^{\gamma}\Gamma \rightarrow   \Gamma \rightarrow \frac{\Gamma}{ {}^{\gamma}{\Gamma} }\rightarrow 0.
\]

This induces an exact sequence:

\[
\ldots \HH^{0}\left(\sigma ; \frac{\Gamma}{ {}^{\gamma}{\Gamma} }\right) \rightarrow \HH^{1}\left(\sigma; {}^{\gamma}\Gamma \right) \xrightarrow{\delta} \HH^{1}\left(\sigma ; \Gamma \right) \rightarrow \HH^{1}\left(\sigma ; \frac{\Gamma}{ {}^{\gamma}{\Gamma} }\right) \rightarrow \ldots
\]
From basic group theory, we have 
$$
\frac{\HH^1(\sigma; {}^{\gamma}\Gamma)}{\text{Ker}\delta} : = \text{Im} \delta.
$$
Since $\sigma$ is an involution (order $2$) so $\HH^{0}\left(\sigma ; \frac{\Gamma}{ {}^{\gamma}{\Gamma} }\right)$ is a $2$ group. 
We also note that $\text{Im} \delta \subset \HH^1(\sigma;\Gamma)$ and hence a $2$ group. 
We deduce that  that $ \#\HH^1(\sigma; {}^{\gamma}\Ga)=2^r$  for some $r \in \N \cup \{0\}$. 
\end{proof}
We define the following important quantities that we use in our computations:
\begin{align}
A_1(N) := \#\HH^1(\sigma; {}^{\gamma_1}\Ga) = 2^a\\
B_1(N) :=  \#\HH^1(\sigma; {}^{{\gamma_1}^\prime} \Ga)  = 2^b;
\end{align}
for $a, b \in \N \cup\{0\}$.

We calculate the Lefschetz numbers for $\Gamma_1(N)$ using  ~\cite[Theorem $2.1$]{MR3464620}.

 \begin{prop}
 
 \label{Lefschetzkey}
 Consider the subgroup  $\Gamma_1(N)$ and let $ A_1(N), B_1(N)$ be as above. Define
 \[
 C_1(N):=B_1(N)). \frac{-N^{2}}{12} \prod\limits_{p \mid N}\left(1-p^{-2}\right) \cdot(k+1) .
 \]
 The Lefschetz number is given by
$$
L\left(\sigma, \Gamma_1(N), M_{k}\right)= \begin{cases}A_1(N)+ ( \frac{N+2}{2}) C_1(N) & \text { if } N \text { is even }\\
A_1(N)+ (\frac{N+1}{2}) C_1(N) & \text { if } N \text { is odd. }\end{cases}.
$$
\end{prop}
\begin{proof}
 The proof is exactly the same as in loc. cit. Here, we have this for $\Gamma_1(N)$
 \[
\chi\left(\Gamma_1(N)^{\gamma_{1}^{\prime} \sigma}\right)= \begin{cases} (\frac{N+2}{2}) \chi\left(\Gamma^e_1(N)\right) & \text { if } N \text { is even }, \\ (\frac{N+1}{2}) \chi\left(\Gamma^e_1(N)\right) & \text { if } N \text { is odd }.\end{cases}
\] 
and 
\begin{align*}
\chi\left(\Gamma^e_1(N)\right):=\chi\left(Y_1(N)\right)=
\chi\left(X_1(N)\right)-\#\left\{\right. \text{cusps of} \left.Y_1(N)\right\}
\\= \: (-1 / 12) N^{2}\prod_{p \mid N}\left(1-p^{-2}\right).
\end{align*}
Here, $\Gamma^{e}_{1}(N)$ is the congruence subgroup of the elliptic modular group $\mathrm{SL}_{2}(\mathbb{Z})$ of level $N$.

\end{proof}

\begin{corollary} 
\label{cor5.1}
Let $p$ be an odd rational prime that is unramified over $K .$ Let $t$ be the number of distinct prime divisors of $D$. 
For $n>0$ we have
$$
L\left(\sigma, \Gamma_1\left(p^{n}\right), M_{k}\right)= {-(2^a + ({p^n+1}) 2^{b-1})}. \frac{\left(p^{2n}-p^{2n-2}\right)}{12} \cdot(k+1).
$$
\end{corollary}
\begin{proof} 
Recall that $p$ is the only prime divisor in the level.  
Since the prime is unramified, we have
\begin{align*}
(A+ (\frac{N+1}{2}) B). \frac{-N^{2}}{12} \prod_{p \mid N}\left(1-p^{-2}\right) \cdot(k+1)\\
= (2^a + (\frac{p^n+1}{2}) 2^b). \frac{-p^{2n}}{12} \left(1-p^{-2}\right) \cdot(k+1)\\
=(2^a + ({p^n+1}) 2^{b-1}). \frac{-1}{12} \left(p^{2n}-p^{2n-2}\right) \cdot(k+1)\\
 = {-(2^a + ({p^n+1}) 2^{b-1})}. \frac{\left(p^{2n}-p^{2n-2}\right)}{12} \cdot(k+1).
\end{align*}
\end{proof}
\section{Traces on the Eisenstein cohomology for $\Gamma_1(N)$}
\label{Tracecohomo}
We compute the trace of complex conjugation on the Eisenstein cohomology following  Seng\"un-T\"urkelli ~\cite{MR3464620} for the subgroups of the form $\Ga_1(N)$ inside the Bianchi modular groups.

Let $\sigma$ represents the {\it complex conjugation} and let the {\it twisted complex conjugation} be denoted by $\tau$. The actions of both $\sigma$ and $\tau$ on $\mathbb{H}_3$ and $\mathrm{SL}_2(\mathcal{O}_K)$ are defined in~\cite{MR3464620}.

  We use the symbol $\rho$ when we want to state results that are true for {\it both} of them. 
Consider the number  $\alpha=\#\left((\frac{\Gamma_1(N)}{\Gamma(N)})^\sigma\right)$ and $\delta$ be the Kronecker $\delta$-function. 
Recall that we are working with an imaginary quadratic field $K$ of class number one.

Let $t$ be the number of distinct prime divisors of the discriminant of $K/\mathbb{Q}$. For a positive odd number $N = p_{1}^{n_{1}} \ldots p_{\tau}^{n_{r}}$ with prime divisors $p_{i}$ that are unramified in $K$, let $\Gamma_1(N)$ be the congruence subgroup of the Bianchi group $\SL_{2}(\cO_K)$ with level the ideal  $(N)$. Let $\sigma$ be the involution induced in the Eisenstein cohomology of $\Gamma_1(N)$ by non-trivial
automorphisms of $K$.  
The following theorem computes the trace of $\sigma$ for Eisenstein cohomology groups:
\begin{thm}
\begin{enumerate}

    \item \label{cohomology1}
    Trace of $\sigma$ on the first cohomology group is given by:
    \[
    \operatorname{tr}\left(\sigma \mid \HH_{Eis}^{2}\left(\Gamma_{1}(N); \mathcal{M}_{k}\right)\right)=-2^{t-\tau-1} \cdot \alpha \cdot \prod\limits_{i=1}^{\tau}\left(p_{i}^{2 n_{i}}-p_{i}^{2 n_{i}-2}\right)+\delta(0, k).
    \]
\item \label{cohomology2} Let $p$ be a rational prime that is inert in $K$. The trace of $\sigma$ on the second cohomology group is given by:
$$
\operatorname{tr}\left(\sigma \mid \HH_{Eis}^1\left(\Ga_1\left(p^n\right); \mathbb{C}\right)\right)= \begin{cases}-2, & \text { if } n=1 \\ -\#\mathcal{C}_{p^n} . \left(\frac{-2}{p^2-1} \right), & \text { if } n>1 .\end{cases}
$$
    \end{enumerate}
\end{thm}
In particular, the trace of ${\sigma}^{2}$ on $\HH_{Eis}^{2}\left(\mathrm{SL}_{2}(\cO_K); \mathcal{M}_{k}\right)$ is
$-2^{t-1}+\delta(0, k)$. 
\\
\begin{proof}
To prove of $(1)$ in the above theorem, we follow the same strategy as in the loc. cit. It is enough to compute
$$
 \#\left( ( (\Ga \slash \Gamma(N)) \cdot  (\Gamma(N) \backslash G \infty)\right)^{\rho}
 = \#\left((\Gamma  \slash \Gamma(N))^\rho \right) \cdot  \#( \Gamma(N) \backslash G \infty)^{\rho}
 $$
 
\; \; \; \; $= \alpha\cdot \#\left(U^{+}(R) \backslash \mathrm{SL}_{2}(R)\right)^{\rho}
$ \  where $\alpha $ is equal to $\#\left((\Gamma  \slash \Gamma(N))^\rho \right)$.

 It is enough to compute $\#\left(U^{+}(R) \backslash \mathrm{SL}_{2}(R)\right)^{\rho}$ (cf.  Seng\"un-T\"urkelli ~\cite{MR3464620}).
    Observe that $\sigma$ fixes the ideal $(N)$. Therefore, the action of $\sigma$ on $\cO_K$ descends to the action on $R$. The action of $\sigma$ and $\tau$ on $\mathrm{SL}_{2}(R)$ as in ~\cite[p. 251]{MR3464620}.

We first treat the special situation when $N=p^{n}$ for a prime $p$. Consider the bijections of sets of matrices:
\[
U^{+}(R) \backslash \mathrm{SL}_{2}(R) \simeq U^{+}(R) \backslash B(R) \times B(R)   \backslash \mathrm{SL}_{2}(R);
\]
Where $B(R)$ is the subgroup of upper-triangular matrices, there are well-known bijections.
$$
B(R) \backslash \mathrm{SL}_{2}(R) \leftrightarrow \mathbb{P}^{1}(R), \quad\left[\begin{array}{cc}
a & b \\
c & d
\end{array}\right] \mapsto(a: c).
$$
Let $\mathbb{P}^{1}(R)$ denotes the projective line over $R$, and
\[
U(R) \backslash B(R) \leftrightarrow R^{*}, \quad\left[\begin{array}{cc}
a & b \\
0 & a^{-1}
\end{array}\right] \mapsto a.
\]
These bijections lead to the identification:
\[
U^{+}(R) \backslash \mathrm{SL}_{2}(R) \simeq R^{*} /\{\pm 1\} \times \mathbb{P}^{1}(R) .
\]
It is straightforward to transfer the action of $\sigma$ and $\tau$ to the right-hand side. We immediately see that:
$$
\left(U^{+}(R) \backslash \mathrm{SL}_{2}(R)\right)^{\rho} \simeq\left(R^{*} /\{\pm 1\}\right)^{\rho} \times \mathbb{P}^{1}(R)^{\rho}.
$$
Start with computing $\# \mathbb{P}^{1}(R)^{\rho}$. It can be seen that $\mathbb{P}^{1}\left(R^{\sigma}\right) \hookrightarrow$ $\mathbb{P}^{1}(R)$ and in fact $\mathbb{P}^{1}\left(R^{\sigma}\right)=\mathbb{P}^{1}(R)^{\sigma}$. 
Note that $\mathbb{P}^{1}\left(R^{\sigma}\right) \simeq \mathbb{P}^{1}\left(\mathbb{Z} / p^{n} \mathbb{Z}\right)$
 and thus has cardinality $p^{n}+p^{n-1}$. Computation shows that $\mathbb{P}^{1}(R)^{\tau}$ has the same number of elements.

Let us now consider $\#(R^*) { }^{\rho}$. The action of $\sigma$ and $\tau$ are same on $R^{*}$. Clearly we have $\#\left(R^{*} /\{\pm 1\}\right)^{\rho}=(1 / 2) \cdot \#\left(R^{*}\right)^{\rho}$.
\begin{itemize}
    \item Let  $p$ be a prime that splits in $K$. We then have 
\[
 \left(R\right)^{\rho}=\left({\cO_K} / p^{n} {\cO_K}\right)^{\rho}=\left(\mathbb{Z}\sqrt{-d} / p^{n} \mathbb{Z}\sqrt{-d}\right)^{\rho} = \left(\mathbb{Z}\sqrt{-d}\right)^{\rho} /\left( p^{n} \mathbb{Z}\sqrt{-d}\right)^{\rho}. 
 \]
 \begin{enumerate}
     \item 
     When $\rho =\sigma$,  the automorphism $\sigma$ acts on $\mathbb{Z}\sqrt{-d}$ by $\sigma (a + b\sqrt{-d}) = a-b\sqrt{-d}$.
Let $(a + b\sqrt{-d}) \in \left(\mathbb{Z}\sqrt{-d}\right)^{\sigma}$ then $(a + b\sqrt{-d}) = (a - b\sqrt{-d})$, this implies  $b=0$  and $ a \in \mathbb{Z} $.
\item 
On the other hand if  $\rho = \tau$, the automorphism $\tau$ acts on $\mathbb{Z}\sqrt{-d}$ by $\tau (a + b\sqrt{-d}) = -a+b\sqrt{-d}$.
Consider now  $(a + b\sqrt{-d}) \in \left(\mathbb{Z}\sqrt{-d}\right)^{\tau}$ then $(a + b\sqrt{-d}) = (-a + b\sqrt{-d})$, this implies  $a=0$  and $ b \in \mathbb{Z} $.
We have $R \simeq\left(\mathbb{Z} / p^{n} \mathbb{Z}\right)^{2}$ and hence $\#\left(R^{*}\right)^{\rho}=\#\left(\mathbb{Z} / p^{n} \mathbb{Z}\right)^{*}=p^{n}-p^{n-1}$.
 \end{enumerate}

   \item 
   When $p$ is inert in $K$, we can view $R$ as the quadratic extension $\left(\mathbb{Z} / p^{n} \mathbb{Z}\right)[\omega]$ of the ring $\mathbb{Z} / p^{n} \mathbb{Z}$. It follows that $R^{*}=\{a+b \cdot \omega 
 \in \left(\mathbb{Z} / p^{n} \mathbb{Z}\right)^{*}[\omega] \}$.  We calculate $\left(R\right)^{\rho}$ when $p$ is split in $K$.  We deduce that  $\left(R^{*}\right)^{\rho}$ is given by 
 \begin{center}
 \[
 \{a+b \cdot \omega \in  \left(\mathbb{Z} / p^{n} \mathbb{Z}\right)[\omega] \mid p \nmid a, b=0 \ \text{for} \ \rho = \sigma \ \text{or} \ p \nmid b, a=0 \  \text{for} \ \rho = \tau.  \}
 \]
 \end{center}
The cardinality of this set is  $p^{n}-p^{n-1}$.
   \end{itemize}
   In both inert and split cases, we get the quantity $\#\left(U^{+}(R) \backslash \mathrm{SL}_{2}(R)\right)^{\rho}=(1 / 2) \cdot\left(p^{2 n}-p^{2(n-1)}\right).$

To finish the proof, let us assume that $N=p_{1}^{n_{1}} \ldots p_{r}^{n_{r}}$ is positive number whose prime divisors $p_{i}$ are unramified in $K$. The result in this general case follows from the simple fact that
\[
\SL_{2}(\cO_K /(N)) \simeq \mathrm{SL}_{2}\left(\cO_K /\left(p_{1}\right)^{n_{1}}\right) \times \ldots \mathrm{SL}_{2}\left(\cO_K /\left(p_{r}\right)^{n_{r}}\right)
\]

We now prove part $(2)$.
Recall the homomorphisms $\Psi(u,v)$ and $\mathbb{C}\left[\Psi_p^*\right]$ be as in   Seng\"un-T\"urkelli ~\cite[p. 252, 253]{MR3464620}.
The proof of (2) is again exactly the same as in loc. cit. The only difference is the order of the cuspidal subgroups. 

For $N=p$, we have  $\#\mathcal{C}_{p} = p^2 -1$. The trace of $\sigma$ on $\mathbb{C}\left[\Psi_p^*\right]$ is $\left(p^2 -1\right). \frac{-2}{p^2-1} = -2$.
For $N=p^n$ with $n>1$, we have  $\#\mathcal{C}_{p^n} =| \bar{P}_{+} \backslash \mathrm{SL}_2(\mathbf{\cO_K} / p^n \mathbf{\cO_K}) / \bar{P} |$. The trace of $\sigma$ on $\mathbb{C}\left[\Psi_p^*\right]$ is $\left(\#\mathcal{C}_{p^n} \right). \frac{-2}{p^2-1}$.
 
\end{proof}

\subsection{Proof of the main theorem}
\begin{proof} 
From  ~\cite[Proposition $2.2$]{MR3464620}, recall that
$$
\operatorname{dim} \HH_{c u s p}^1\left(\Gamma; M_{k}\right) \geq \frac{1}{2} \left(L(\sigma, \Gamma, M_k)+\operatorname{tr}\left(\sigma_{Eis}^1, \Gamma, M_k\right)-\operatorname{tr}\left(\sigma_{Eis}^2, \Gamma, M_k\right)\right) .
$$

The claim in (2) follows directly from theorem ~\ref{cohomology1} and theorem ~\ref{cohomology2}, together with the Lefschetz number formula provided in corollary  ~\ref{cor5.1}. 
\end{proof}
\begin{corollary}
Let all the parameters be as in the theorem above. We have the following weight aspect asymptotic dimension formula of cuspidal cohomology: 
$$
 \operatorname{dim} \HH_{cusp}^1\left(\Gamma_{1}\left(p^n\right); M_{k}\right) \gg k
$$
as $k$ increases and $n$ is fixed.
\end{corollary}
\begin{proof}
Fix the congruence subgroup  $\Gamma$.  By Proposition ~\ref{prop8}, the dimension of the Eisenstein part of the cohomology is the same for every weight $k>0$. Hence, the weight aspect of the asymptotic can be deduced from the corollary ~\ref{cor5.1} of the previous section.
\end{proof}

In a recent paper ~\cite{Fu}, Weibo Fu determined both the upper and lower bounds of cuspidal cohomology (weight asymptotic), using the work of Finis, Grunewald, and Tirao ~\cite{MR2649984}. Fu computed these bounds without utilizing the Eisenstein part of the cohomology. However, our calculation (similar to Seng\"un-T\"urkelli) shows that the Eisenstein part of the cohomology can also be used to make conclusions about the cuspidal part. Note that in loc. cit also, the author used both (completed) homology and cohomology groups and an interesting result of Marshall ~\cite{MR2912713} comparing the dimensions of the spaces over $\Q_p$ and $\C$.

\bibliographystyle{crelle}
\bibliography{ref}

\def\cprime{$'$}
\begin{thebibliography}{10}

\bibitem{aranes2010modular}
M.~Aranes, Modular symbols over number fields, Ph.D. thesis, University of
  Warwick (2010).

\bibitem{MR3873111}
D.~Banerjee and L.~Merel, \emph{The {E}isenstein cycles as modular symbols}, J.
  Lond. Math. Soc. (2) \textbf{98} (2018), no.~2,  329--348.

\bibitem{banerjee2022eisenstein}
---{}---{}---, \emph{The Eisenstein cycles and Manin Drinfeld properties},
  Forum Mathematicum  (2023)\hskip -.1cm.

\bibitem{JCthesis}
J.~E. Cremona, \emph{Thesis on modular symbols}, Worcester College,University
  of Oxford  (1981)\hskip -.1cm.

\bibitem{MR743014}
---{}---{}---, \emph{Hyperbolic tessellations, modular symbols, and elliptic
  curves over complex quadratic fields}, Compositio Math. \textbf{51} (1984),
  no.~3,  275--324.

\bibitem{MR3464620}
M.~H. \c{S}eng\"{u}n and S.~T\"{u}rkelli, \emph{Lower bounds on the dimension
  of the cohomology of {B}ianchi groups via {S}czech cocycles}, J. Th\'{e}or.
  Nombres Bordeaux \textbf{28} (2016), no.~1,  237--260.

\bibitem{MR2112196}
F.~Diamond and J.~Shurman, A first course in modular forms, Vol. 228 of
  \emph{Graduate Texts in Mathematics}, Springer-Verlag, New York (2005), ISBN
  0-387-23229-X.

\bibitem{MR799662}
J.~Elstrodt, F.~Grunewald, and J.~Mennicke, \emph{Eisenstein series on
  three-dimensional hyperbolic space and imaginary quadratic number fields}, J.
  Reine Angew. Math. \textbf{360} (1985) 160--213.

\bibitem{MR1483315}
---{}---{}---, Groups acting on hyperbolic space, Springer Monographs in
  Mathematics, Springer-Verlag, Berlin (1998), ISBN 3-540-62745-6. Harmonic
  analysis and number theory.

\bibitem{MR2649984}
T.~Finis, F.~Grunewald, and P.~Tirao, \emph{The cohomology of lattices in
  {${\rm SL}(2,\mathbb{C})$}}, Experiment. Math. \textbf{19} (2010), no.~1,
  29--63.

\bibitem{Fu}
W.~Fu, \emph{Sharp bounds for multiplicities of Bianchi modular forms}, arXiv
  preprint arXiv:2201.11190  (2022)\hskip -.1cm.

\bibitem{MR1671216}
E.~Ghate, \emph{Critical values of the twisted tensor {$L$}-function in the
  imaginary quadratic case}, Duke Math. J. \textbf{96} (1999), no.~3,
  595--638.

\bibitem{MR1288523}
P.~Griffiths and J.~Harris, Principles of algebraic geometry, Wiley Classics
  Library, John Wiley \& Sons, Inc., New York (1994), ISBN 0-471-05059-8.
  Reprint of the 1978 original.

\bibitem{MR633658}
G.~Harder, \emph{Period integrals of cohomology classes which are represented
  by {E}isenstein series}, in Automorphic forms, representation theory and
  arithmetic ({B}ombay, 1979), Vol.~10 of \emph{Tata Inst. Fund. Res. Studies
  in Math.}, 41--115, Tata Institute of Fundamental Research, Bombay (1981).

\bibitem{MR892187}
---{}---{}---, \emph{Eisenstein cohomology of arithmetic groups. {T}he case
  {${\rm GL}_2$}}, Invent. Math. \textbf{89} (1987), no.~1,  37--118.

\bibitem{MR1272981}
H.~Hida, \emph{On the critical values of {$L$}-functions of {${\rm GL}(2)$} and
  {${\rm GL}(2)\times{\rm GL}(2)$}}, Duke Math. J. \textbf{74} (1994), no.~2,
  431--529.

\bibitem{MR870309}
H.~Ito, \emph{A function on the upper half space which is analogous to the
  imaginary part of {${\rm log}\,\eta(z)$}}, J. Reine Angew. Math. \textbf{373}
  (1987) 148--165.

\bibitem{MR503433}
P.~F. Kur\v{c}anov, \emph{The cohomology of discrete groups and {D}irichlet
  series that are related to {J}acquet-{L}anglands cusp forms}, Izv. Akad. Nauk
  SSSR Ser. Mat. \textbf{42} (1978), no.~3,  588--601.

\bibitem{MR4569265}
J.~Kwon, \emph{Bianchi modular symbols and {$p$}-adic {$L$}-functions}, J.
  Number Theory \textbf{249} (2023) 274--313.

\bibitem{lingham2005modular}
M.~P. Lingham, Modular forms and elliptic curves over imaginary quadratic
  fields, Ph.D. thesis, University of Nottingham (2005).

\bibitem{MR2912713}
S.~Marshall, \emph{Bounds for the multiplicities of cohomological automorphic
  forms on {$\rm GL_2$}}, Ann. of Math. (2) \textbf{175} (2012), no.~3,
  1629--1651.

\bibitem{MR4642603}
Z.~Miao, A.~Nguyen, and T.~A. Wong, \emph{Kronecker's first limit formula for
  {K}leinian groups}, Funct. Approx. Comment. Math. \textbf{69} (2023), no.~1,
  7--25.

\bibitem{MR3091734}
A.~D. Rahm and M.~H. \c{S}eng\"{u}n, \emph{On level one cuspidal {B}ianchi
  modular forms}, LMS J. Comput. Math. \textbf{16} (2013) 187--199.

\bibitem{MR507801}
J.~Rohlfs, \emph{Arithmetisch definierte {G}ruppen mit {G}aloisoperation},
  Invent. Math. \textbf{48} (1978), no.~2,  185--205.

\bibitem{MR1480883}
J.~Rohlfs and J.~Schwermer, \emph{An arithmetic formula for a topological
  invariant of {S}iegel modular varieties}, Topology \textbf{37} (1998), no.~1,
   149--159.

\bibitem{scz}
R.~Sczech, \emph{Dedekindsummen mit elliptischen {F}unktionen}, Invent. Math.
  \textbf{76} (1984), no.~3,  523--551.

\bibitem{MR850124}
---{}---{}---, \emph{Dedekind sums and power residue symbols}, Compositio Math.
  \textbf{59} (1986), no.~1,  89--112.

\bibitem{MR4356858}
V.~Serban, \emph{A finiteness result for {$p$}-adic families of {B}ianchi
  modular forms}, J. Number Theory \textbf{233} (2022) 405--431.

\bibitem{MR272790}
J.-P. Serre, \emph{Le probl\`eme des groupes de congruence pour ${SL}_2$}, Ann.
  of Math. (2) \textbf{92} (1970) 489--527.

\bibitem{MR2887610}
R.~Torrey, \emph{On {S}erre's conjecture over imaginary quadratic fields}, J.
  Number Theory \textbf{132} (2012), no.~4,  637--656.

\bibitem{MR953667}
U.~Weselmann, \emph{Eisensteinkohomologie und {D}edekindsummen f\"{u}r {${\rm
  GL}_2$} \"{u}ber imagin\"{a}r-quadratischen {Z}ahlk\"{o}rpern}, J. Reine
  Angew. Math. \textbf{389} (1988) 90--121.

\bibitem{Chriswilliams}
C.~Williams, \emph{Thesis Ph.D. University of Warwick}, University of Warwick
  Mathematics Institute, degree-granting institution  (2016)\hskip -.1cm.

\end{thebibliography}

\end{document}